\title{Multiplier ideals via Mather discrepancy}
\author{Lawrence Ein}
\author{Shihoko Ishii} 
\author{Mircea Musta\cedilla{t}\u{a}}
\address{Department of Mathematics, University of Illinois at Chicago, 
Chicago, IL 60607-7045, USA}
\email{ein@math.uic.edu}
\address{Department of Mathematical Science, University of Tokyo, Meguro, Tokyo, Japan}
\email{shihoko@ms.u-tokyo.ac.jp}
\address{Department of Mathematics, University of Michigan, Ann Arbor, MI 48109-1043, USA}
\email{mmustata@umich.edu}
\dedicatory{Dedicated to Professor Shigefumi Mori on the occasion of his 60th birthday}
\thanks{2010\,\emph{Mathematics Subject Classification}.
 Primary 14F18; Secondary 14B05.
\newline
The first author was partially supported by  NSF grant DMS-1001336,
the second author was partially supported by Grant-in-Aid (B) 22340004 and (S) 19104001,
and the third author was partially supported by NSF grant DMS-0758454, and
  by a Packard Fellowship}
\keywords{Multiplier ideals, Nash blow-up.}
\def\cL{\mathcal{L}}
\def\cO{\mathcal{O}}
\newcommand{\pd}{{\operatorname{pd}}}
\newcommand{\codim}{\operatorname{codim}}
\newcommand{\spec}{\operatorname{Spec}}
\let \cedilla =\c
\renewcommand{\o}[0]{{\mathcal O}} 
\newcommand{\hk}{{\widehat K}}
\renewcommand{\a}{{\frak{a}}}
\renewcommand{\b}{{\frak{b}}}
\renewcommand{\j}{{\mathcal J}}
\newcommand{\hj}{\widehat{\mathcal J}}
\newcommand{\hKY}{\widehat K_{Y/X}}
\def\to {\longrightarrow}
\newtheorem{thm}{Theorem}[section]
\newtheorem{lem}[thm]{Lemma}
\newtheorem{cor}[thm]{Corollary}
\newtheorem{prop}[thm]{Proposition}
\newtheorem{question}[thm]{Question}
\theoremstyle{definition}
\newtheorem{defn}[thm]{Definition}
\newtheorem{exmp}[thm]{Example}
\newtheorem{rem}[thm]{Remark}
\theoremstyle{remark}
\begin{document}
\maketitle

\begin{abstract}
 We define a version of multiplier ideals, the \emph{Mather multiplier ideals}, on a variety
 with arbitrary singularities, using
 the Mather discrepancy and the Jacobian ideal. In this context we prove
a relative vanishing theorem, thus obtaining restriction theorems and a subadditivity and summation theorems. 
The Mather multiplier ideals also satisfy a Skoda type result. As an application, we obtain a 
Brian\cedilla{c}on-Skoda type formula for the  integral closures  of ideals on a variety with arbitrary singularities. 
\end{abstract}

\section{Introduction}
The main goal of this paper is to introduce and prove the basic properties 
of \emph{Mather multiplier ideals}, a variant of the usual multiplier ideals which have found
wide applications recently in higher-dimensional algebraic geometry. Typically, the definition
of multiplier ideals is given in terms of a suitable resolution of singularities, and an ingredient that plays a fundamental role is the discrepancy (or relative canonical divisor). In order
for the discrepancy to be well-defined, one needs to make some assumptions 
on the singularities, namely to assume that the ambient variety is normal and 
${\mathbf Q}$-Gorenstein. The main advantage of Mather multiplier ideals is that they
can be defined on any variety (by which we mean an integral scheme of finite type over an algebraically closed field of characteristic zero). This agrees with the usual multiplier
ideals on normal, locally complete intersection varieties.

Given an arbitrary variety $X$ and a nonzero ideal $\a$ on $X$, we consider a log resolution
$f\colon Y\to X$ for the product of $\a$ with the Jacobian ideal ${\j ac}_X$ of $X$ (see \S 1 for the relevant definitions). Such a resolution has the property that the image of the canonical map
$f^*(\Omega_X^n)\to \Omega_Y^n$ (where $n=\dim(X)$) can be written as
$\cO_Y(-\widehat{K}_{Y/X})\cdot\Omega_Y^n$, for some effective divisor $\widehat{K}_{Y/X}$
on $Y$. This is the \emph{Mather discrepancy divisor}, and if we write 
${\j ac}_X\cdot\cO_Y=\cO_Y(-J_{Y/X})$,  then the difference $\widehat{K}_{Y/X}-J_{Y/X}$
plays the role that the discrepancy plays in the usual definition of multiplier ideals.
More precisely, if $\a\cdot\cO_Y=\cO_Y(-Z_{Y/X})$ and if $t\in {\mathbf R}_{\geq 0}$, then the
Mather multiplier ideal of $\a$ with exponent $t$ is given by
$$\hj(X,\a^t):=f_*\cO_Y(\widehat{K}_{Y/X}-J_{Y/X}-\lfloor t Z_{Y/X}\rfloor).$$
A priori this is only a fractional ideal, but we show that it is, in fact, contained in $\cO_X$.
If $X$ is normal and locally complete intersection, then the difference
$\widehat{K}_{Y/X}-J_{Y/X}$ is equal to the usual discrepancy $K_{Y/X}$, hence
the Mather multiplier ideals on $X$ agree with the usual multiplier ideals. 

The importance of multiplier ideals comes from the role they play in connection with vanishing theorems (see \cite[Chapter 9]{laz}). The basic result that accounts for this connection is 
a relative vanishing theorem. We prove a similar result in the context of Mather multiplier ideals. With the above notation, this says that we have the vanishing of all higher direct images:
$$R^if_*\cO_Y(\widehat{K}_{Y/X}-J_{Y/X}-\lfloor t Z_{Y/X}\rfloor)=0\,\,\text{for all}\,\,i>0.$$ 
As in the ``classical" case, this leads to some important properties of Mather multiplier ideals:
restriction theorems (we prove three such results, in various settings), a Subadditivity Theorem and a Summation Theorem, as well as a Skoda-type Theorem (see \cite{laz} for the
``classical" statements of these results). 
In proving these results, we make use of the approach developed by Eugene Eisenstein in
\cite{ee} for proving a version of the Restriction Theorem, and a Subadditivity Theorem for multiplier ideals on singular varieties.
We obtain as a corollary the following analogue of the 
Brian\c{c}on-Skoda Theorem on arbitrary varieties: if $\a$ is an ideal on $X$, and 
$\dim(X)=n$, then the integral closure of 
${\j ac}_X\cdot\a^m$ is contained in $\a^{m-n+1}$ for every $m\geq n$.

The notion of Mather discrepancy is suggested by the approach to singularities
via spaces of arcs (see \cite{e-Mus2}). This replacement for the usual discrepancy was first introduced in \cite{DEI}, where it was
applied to relate divisorial valuations to contact loci in arc spaces for singular varieties.
It was subsequently used in \cite{Ishii} to define new versions of some familiar invariants
of singularities (such as the log canonical threshold and the minimal log discrepancies)
on arbitrary varieties. 
While we were completing this manuscript, we found out
about the interesting preprint \cite{dd} of Tommaso de Fernex and Roi Docampo,
in which the authors also undertake a study of invariants of singularities defined from the point
of view of Mather discrepancies, and relate these to rational and du Bois singularities.

\noindent
{\bf Acknowledgement}
We are grateful to Takehiko Yasuda and Orlando Villamayor for useful conversations with the second  author on Nash blow-ups and factorizing resolutions, respectively.

\section{Definition of Mather multiplier ideal}

Recall that all our varieties are assumed to be irreducible and reduced, over a fixed algebraically 
closed field $k$ of characteristic zero. We begin by recalling the definition of Nash blow-up and 
Mather discrepancy, following \cite{DEI}. 

Let $X$ be an $n$-dimensional variety. The sheaf $\Omega_X^n=\wedge^n\Omega_X$ is invertible over 
the smooth locus $X_{\rm reg}$ of $X$, hence the morphism
$$\pi\colon {\mathbf P}(\Omega_X^n)={\mathcal Proj}({\rm Sym}(\Omega_X^n))\to X$$
is an isomorphism over $X_{\rm reg}$. The \emph{Nash blow-up} $\widehat{X}$ is the closure of
$\pi^{-1}(X_{\rm reg})$ in ${\mathbf P}(\Omega_X^n)$ (with the reduced scheme structure). Note that by construction
we have a projective birational morphism $\nu\colon\widehat{X}\to X$
and a surjective morphism $\nu^*(\Omega_X^n)\to \cO_{{\mathbf P}(\Omega_X^n)}(1)\vert_{\widehat{X}}$. Furthermore, this is universal in the following sense: given a birational morphism
of varieties $\phi\colon Y\to X$, this factors through $\nu$ if and only if 
$\phi^*(\Omega_X^n)$ admits a morphism onto an invertible sheaf $\cL$ on $Y$ (which 
identifies $\cL$ to the quotient of $\phi^*(\Omega_X^n)$ by its torsion).

Recall that a \emph{resolution of singularities} of $X$ is a projective birational morphism
$f\colon Y\to X$, with $Y$ smooth. This is a \emph{log resolution} of a nonzero ideal $\a$ on 
$X$ if 
$\a\cdot\cO_Y=\cO_Y(-D)$ for an effective divisor $D$, the exceptional locus of $f$ is an
effective divisor $E$,
and $D\cup E$ has simple normal crossings. Given several ideals 
$\a_1,\ldots,\a_r$, a log resolution of the product $\a_1\cdot\ldots\cdot \a_r$
gives a simultaneous log resolution of all $\a_i$, such that the divisors corresponding to each 
of the $\a_i$ and the exceptional divisor have simple normal crossings. 

\begin{defn} [\cite{DEI}]  Let $X$ be a variety of dimension $n$ and $f\colon Y\to X$ a resolution of singularities that factors through the Nash blow-up of $X$. 
In this case the image of the canonical homomorphism
$$f^*(\Omega^n_X) \to \Omega^n_Y$$
is an invertible sheaf of the form $J \cdot \Omega^n_Y$, 
where $J$ is the invertible ideal sheaf on $Y$ that defines an effective divisor supported on the exceptional locus of $f$. 
This divisor is called the \emph{Mather discrepancy divisor} and we denote it by $\hKY$.
We refer to \cite{DEI} for further details about Nash blow-ups and Mather discrepancies.
\end{defn}

\begin{defn}
Recall that the \emph{Jacobian ideal} $\j ac_X$ of a variety $X$ is the $0^{\rm th}$ Fitting ideal 
${\rm Fitt}_0(\Omega_X)$ of $\Omega_X$. This is an ideal whose support is the singular locus of $X$.
If $f\colon Y\to X$ is a log resolution of $\j ac_X$, we denote by $J_{Y/X}$ the effective divisor
on $Y$ such that  $\j ac_X\cdot\o_Y=\o_Y(-J_{Y/X})$.
\end{defn}

\begin{rem}
\label{lipman}
 Every log-resolution of $\j ac_X$ factors through the Nash blow-up.
 Indeed, it follows from \cite[Lemma~1]{lip} that given
 a proper birational morphism $\varphi\colon Y\to X$ of varieties of dimension $n$,
 the following are equivalent:
  \begin{enumerate}
 \item
 The smallest Fitting ideal ${\rm Fitt}_0(\varphi^*\Omega_X)$ is invertible;
 \item
 The projective dimension $\pd(\varphi^*\Omega_X)\leq 1$ and $\varphi^*\Omega_X/
 (\varphi^*\Omega_X)^{tor}$ is locally free of rank $n$, where $(\varphi^*\Omega_X)^{tor}$
 is the torsion part of $\varphi^*\Omega_X$.
 \end{enumerate}
 If $\varphi$ is a log resolution of $\j ac_X$, then
 ${\rm Fitt}_0(\varphi^*\Omega_X)=\j ac_X\cdot\o_Y$ is invertible.
 Therefore by Lipman's result $\varphi^*\Omega_X/
 (\varphi^*\Omega_X)^{tor}$ is locally free of rank $n$, and we deduce that $\varphi$ factors
 through $\widehat{X}$ by the 
universal property of the Nash blow-up.
\end{rem}

\begin{defn}\label{Mather_multiplier}
Let $X$ be a variety, $\a\subseteq \o_X$  a nonzero ideal on $X$, and 
 $t\in{\mathbf R}_{\geq 0}$. Given a log resolution $f\colon Y\to X$ of $\j ac_X\cdot \a$,
we denote by $Z_{Y/X}$ the effective divisor on $Y$ such that
 $\a\cdot\o_Y=\o_Y(-Z_{Y/X})$.
The Mather multiplier ideal of $\a$ of exponent $t$ is defined by
$$\hj (X, \a^t)=f_*\left(\o_Y(\hk_{Y/X}-J_{Y/X}-\lfloor t Z_{Y/X}\rfloor)\right),$$
where for a real divisor $D$ we denote by $\lfloor D\rfloor$ the largest integral divisor $\leq D$.
\end{defn}

\begin{rem}
The Mather multiplier ideal does not depend on the choice of 
$f$. Indeed, arguing as in the proof of \cite[Theorem~9.2.18]{laz}, we see that it is enough to show that if $g\colon Y'\to Y$ is such that $f'=f\circ g$ is another log resolution of
$\j ac_X\cdot \a$, then
\begin{equation}\label{eq_indep_def}
\cO_Y(\hk_{Y/X}-J_{Y/X}-\lfloor t Z_{Y/X}\rfloor)=g_*\left(\o_{Y'}(\hk_{Y'/X}-J_{Y'/X}-
\lfloor t Z_{Y'/X}\rfloor)\right).
\end{equation}
Since $\hk_{Y'/X}=g^*(\hk_{Y/X})+K_{Y/X}$, where $K_{Y/X}$ is the (usual) discrepancy of
$h$, and $J_{Y'/X}=g^*(J_{Y/X})$ and $Z_{Y'/X}=g^*(Z_{Y/X})$, then (\ref{eq_indep_def})
follows from \cite[Lemma~9.2.19]{laz}, which is the main ingredient in showing that the usual definition of multiplier ideals is independent of log resolution.
\end{rem}
\begin{rem}
For nonzero ideals $\a_1,\ldots,\a_r$ on a variety $X$, one can similarly define a mixed
Mather multiplier ideal $\hj (X, \a_1^{t_1}\cdots\a_r^{t_r})$ for every $t_1,\ldots,t_r\in{\mathbf R}_{\geq 0}$. With the notation in Definition~\ref{Mather_multiplier}, if 
$f$ is a log resolution of ${\j ac}_X\cdot\a_1\cdot\ldots\cdot \a_r$, and if we put
$\a_i\cdot\cO_Y=\cO_Y(-Z_i)$, then
$$\hj (X, \a_1^{t_1}\cdots\a_r^{t_r})
=f_*\left(\o_Y(\hk_{Y/X}-J_{Y/X}-\lfloor t_1Z_1+\ldots+t_rZ_r\rfloor)\right).$$
For simplicity, we will mostly consider Mather multiplier ideals for one ideal, but all statements
have obvious generalizations to the mixed case.
\end{rem}
\begin{rem}
If $X$ is normal and locally a complete intersection, then $\hj (X, \a^t)=\j (X, \a^t)$, where the right hand side is the usual multiplier ideal (see \cite[Chapter 9]{laz} for definition).
Indeed, in this case the image of the canonical map $\Omega^n_X \to \omega_X$ is $\j ac_X\cdot\omega_X$,
hence 
$\hk_{Y/X}-J_{Y/X}=K_{Y/X}$.
In particular, we see that $\hj (X, \a^t)=\j (X, \a^t)$ if $X$ is smooth.
\end{rem}
\begin{rem}
For now, the Mather multiplier ideal is only a fractional ideal of $\o_X$, with
$\hj (X, \a^t)\subset i_*(\o_{X_{reg}})$, where $i\colon X_{reg}\hookrightarrow X$ is the inclusion of the smooth locus. This inclusion implies that
$\hj (X, \a^t)$ is an ideal of $\o_X$ if $X$ is normal. We will see in 
Corollary~\ref{cor_ideal} that, in fact, the same is true on every variety $X$.
\end{rem}

The assertions in the next proposition are an immediate consequence of the definition.
\begin{prop}\label{elementary}
Let $\a$ and $\b$ be nonzero ideals on the variety $X$.
\begin{enumerate}
\item If $\a\subseteq\b$, then $\hj(X,\a^t)\subseteq\hj(X,\b^t)$ for every 
$t\in {\mathbf R}_{\geq 0}$.
\item If $s\leq t$ are in ${\mathbf R}_{\geq 0}$, then $\hj(X,\a^t)\subseteq\hj(X,\a^s)$.
\item If $t\in {\mathbf R}_{\geq 0}$ and $m$ is a positive integer, then
$\hj(X, (\a^m)^t)=\hj(X,\a^{mt})$.
\end{enumerate}
\end{prop}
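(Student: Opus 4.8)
The plan is to deduce all three statements by computing the relevant Mather multiplier ideals on a single, suitably chosen log resolution, which is legitimate precisely because $\hj(X,\a^t)$ is independent of the choice of log resolution. The only inputs needed beyond this are two elementary observations: that an inclusion of ideals pulls back to the reverse inequality of the associated effective divisors, and that both $D\mapsto\lfloor D\rfloor$ and $f_*$ (being left exact) preserve inclusions of subsheaves of the constant sheaf of rational functions on $Y$.

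For (1), I would take a log resolution $f\colon Y\to X$ of ${\j ac}_X\cdot\a\cdot\b$, and write $\a\cdot\o_Y=\o_Y(-Z)$ and $\b\cdot\o_Y=\o_Y(-Z')$; by the independence of $\hj$ from the resolution, this same $Y$ computes both $\hj(X,\a^t)$ and $\hj(X,\b^t)$. From $\a\subseteq\b$ we get $\a\cdot\o_Y\subseteq\b\cdot\o_Y$, hence $Z'\leq Z$ as effective divisors, hence $\lfloor tZ'\rfloor\leq\lfloor tZ\rfloor$ for every $t\in{\mathbf R}_{\geq 0}$. Therefore $\o_Y(\hk_{Y/X}-J_{Y/X}-\lfloor tZ\rfloor)\subseteq\o_Y(\hk_{Y/X}-J_{Y/X}-\lfloor tZ'\rfloor)$, and applying $f_*$ gives the asserted inclusion. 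For (2), I would take a log resolution $f\colon Y\to X$ of ${\j ac}_X\cdot\a$, write $\a\cdot\o_Y=\o_Y(-Z_{Y/X})$, note that $s\leq t$ implies $sZ_{Y/X}\leq tZ_{Y/X}$ and hence $\lfloor sZ_{Y/X}\rfloor\leq\lfloor tZ_{Y/X}\rfloor$, and conclude exactly as in (1).

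For (3), the key point is that a log resolution $f\colon Y\to X$ of ${\j ac}_X\cdot\a$, with $\a\cdot\o_Y=\o_Y(-Z)$, is automatically a log resolution of ${\j ac}_X\cdot\a^m$, since $\a^m\cdot\o_Y=\o_Y(-mZ)$ has support equal to that of $Z$ and so the simple normal crossings condition is unchanged; moreover the divisor associated to $\a^m$ on $Y$ is $mZ$. Then one simply observes $\lfloor t(mZ)\rfloor=\lfloor(mt)Z\rfloor$, so that
$$\hj(X,(\a^m)^t)=f_*\o_Y\bigl(\hk_{Y/X}-J_{Y/X}-\lfloor t(mZ)\rfloor\bigr)=f_*\o_Y\bigl(\hk_{Y/X}-J_{Y/X}-\lfloor(mt)Z\rfloor\bigr)=\hj(X,\a^{mt}).$$

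I do not expect any real obstacle: each part is a one-line divisor comparison once the right resolution is fixed. The only step that merits a word of care is the use of a single resolution dominating two different ideals in part (1), and this is exactly what the independence of $\hj$ from the log resolution supplies.
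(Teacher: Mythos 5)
Your proof is correct and is exactly the argument the paper has in mind: the paper gives no written proof, stating only that the assertions are ``an immediate consequence of the definition,'' and your write-up supplies precisely the intended details (a common log resolution via independence of the choice, the reversal of divisor inequalities under inclusion of ideals, monotonicity of the floor, and left-exactness of $f_*$). No gaps.
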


\begin{defn}
\label{fact}
  Let $A$ be a variety and $X$ a reduced closed subscheme of $A$.
  A morphism $\varphi_A\colon  \overline{A}\to A$ is a \emph{factorizing resolution} of $X$ inside $A$ if
  the following hold:
  \begin{enumerate}
  \item
  $\varphi_A$ is an isomorphism at the generic point of every irreducible component of $X$. 
  In particular, the strict transform $\overline{X}$ of $X$ is defined.
  \item The morphisms $\varphi_A$ and $\varphi_X:=\varphi_A|_{\overline X}$ are resolutions of singularities of $A$ and $X$, respectively, and the union of $\overline{X}$
  with the exceptional locus ${\rm Exc}(\varphi_A)$ has simple normal crossings.
  \item If $I_X$ and $I_{\overline X}$ are the defining ideals of $X$ and $\overline X$ 
  in $A$ and $\overline A$, respectively, then there exists an effective divisor 
  $R_{X/A}$ on $\overline A$ such that 
  $$I_X\cdot\o_{\overline A}=I_{\overline X}\cdot \o_{\overline A}(-R_{X/A}).$$  
  \end{enumerate}
\end{defn}

Note that the divisor $R_{X/A}$ in (3) is supported on ${\rm Exc}(\phi_A)$, hence by assumption 
has simple normal crossings with $\overline{X}$.
 Bravo and Villamayor proved in \cite{bv} that if $A$ is smooth and $X$ is reduced and equidimensional, then factorizing resolutions of $X$ in $A$ exist.
The following strengthening is due to Eisenstein \cite[Lemma 3.1]{ee}.

\begin{lem}
\label{singfac}
Let $X\subseteq A$ be a reduced closed subscheme of a variety $A$. Let $\varphi_1\colon
A'\to A$ be a birational morphism from a smooth variety $A'$ that is an isomorphism at the generic points of the irreducible components of $X$.
Let $X'$ be the strict transform of $X$ in $A'$, and let $E$ be a divisor on $A'$ with simple normal crossing support, such that no component of $X'$ is contained in $E$.
Then there exists a morphism $\varphi_2\colon\overline{A}\to A'$ such that $\varphi_A:=\varphi_1\circ\varphi_2$ is a factorizing resolution of $X$ inside $A$, and $\overline{X}\cup 
{\rm Exc}(\varphi_A)\cup
{\rm Supp} (\varphi_2^*E)$ has  simple normal crossings, 
where $\overline{X}$ is the strict transform of $X$ in $\overline A$.
\end{lem}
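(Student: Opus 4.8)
The plan is to bootstrap from the Bravo--Villamayor theorem \cite{bv}, which produces factorizing resolutions in a \emph{smooth} ambient variety, applied to the strict transform $X'\subseteq A'$, and then to compose the resulting modification with $\varphi_1$. Two issues keep this from being immediate. First, $\mathrm{Exc}(\varphi_A)$ contains $\varphi_2^{-1}(\mathrm{Exc}(\varphi_1))$, which is a priori not even a divisor, so the simple normal crossings requirements in Definition~\ref{fact}(2) and in the last assertion of the lemma force us to absorb $\mathrm{Exc}(\varphi_1)$ into a simple normal crossings divisor beforehand. Second --- and this is the real point --- condition (3) concerns $I_X\cdot\o_{\overline A}$, whereas the theorem only controls $I_{X'}\cdot\o_{\overline A}$, and on $A'$ the ideal $I_X\cdot\o_{A'}$ can differ from $I_{X'}$ by a non-invertible ideal supported on $\mathrm{Exc}(\varphi_1)$.

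\emph{Reduction.} Set $E_1=\mathrm{Exc}(\varphi_1)$. Since $\varphi_1$ is an isomorphism over the generic points of the components of $X$, the ideal $I_X\cdot\o_{A'}$ agrees with the reduced ideal $I_{X'}$ off $E_1$; grouping its primary decomposition accordingly gives $I_X\cdot\o_{A'}=I_{X'}\cap\fc$ with $V(\fc)\subseteq E_1$ and $\fc$ sharing no component with $X'$. Choose (by Hironaka) a log resolution of $I_{E_1}\cdot I_E\cdot\fc$; its centres contain no component of $X'$, so after composing $\varphi_1$ with it and enlarging $E$ accordingly (which only strengthens the conclusion) we may assume in addition that $E_1\cup\mathrm{Supp}(E)$ has simple normal crossings, that $\fc\cdot\o_{A'}=\o_{A'}(-D)$ for an effective divisor $D$ with $E_1\cup\mathrm{Supp}(E)\cup\mathrm{Supp}(D)$ of simple normal crossings, that no component of $X'$ lies in this divisor, and --- the technical heart of the reduction --- that $I_X\cdot\o_{A'}=I_{X'}\cdot\o_{A'}(-D)$.

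\emph{Main step.} Apply the Bravo--Villamayor factorizing resolution to $X'\subseteq A'$, carried out compatibly with the fixed simple normal crossings divisor $E_1+E+D$, so that every centre blown up is transverse to the accumulated exceptional divisor and to the strict transforms of $E_1$, $E$, $D$; this is possible for the algorithmic constructions of factorizing resolutions. We obtain $\varphi_2\colon\overline A\to A'$ with $\overline X$ smooth, $I_{X'}\cdot\o_{\overline A}=I_{\overline X}\cdot\o_{\overline A}(-R')$ for an effective exceptional divisor $R'$, and $\overline X\cup\mathrm{Exc}(\varphi_2)\cup\mathrm{Supp}\bigl(\varphi_2^*(E_1+E+D)\bigr)$ of simple normal crossings. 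Then $\varphi_A=\varphi_1\circ\varphi_2$ resolves $A$, $\varphi_X$ resolves $X$, and since $\mathrm{Exc}(\varphi_A)\subseteq\mathrm{Exc}(\varphi_2)\cup\varphi_2^{-1}(E_1)$ while $\varphi_2^{-1}(E_1)$ and $\varphi_2^*E$ lie in the simple normal crossings divisor above, conditions (1), (2) and the last assertion of the lemma follow. Finally, using $I_X\cdot\o_{A'}=I_{X'}\cdot\o_{A'}(-D)$ from the reduction, we get $I_X\cdot\o_{\overline A}=(I_{X'}\cdot\o_{\overline A})\cdot\o_{\overline A}(-\varphi_2^*D)=I_{\overline X}\cdot\o_{\overline A}(-R'-\varphi_2^*D)$, so (3) holds with $R_{X/A}=R'+\varphi_2^*D$, effective and supported on $\mathrm{Exc}(\varphi_A)$.

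\emph{Main obstacle.} The delicate point is the identity $I_X\cdot\o_{A'}=I_{X'}\cdot\o_{A'}(-D)$ claimed at the end of the reduction. The inclusions $I_{X'}\cdot\fc\subseteq I_X\cdot\o_{A'}\subseteq I_{X'}$ only sandwich $I_X\cdot\o$, after log-resolving $\fc$, strictly between $(I_{X'}\cdot\o)\cdot\o(-D)$ and $(I_{X'}\cdot\o)\cap\o(-D)$, so the factoring property does not come for free; what must be shown is that resolving exactly the part $\fc$ of $I_X\cdot\o_{A'}$ supported on $\mathrm{Exc}(\varphi_1)$ already turns $I_X\cdot\o$ into an ideal divisible by precisely the strict transform of $X$. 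I expect this to require a local computation exploiting that $\fc$ shares no component with $X'$, and to be the step where the bookkeeping of the reduction --- keeping $\fc$, $\mathrm{Exc}(\varphi_1)$ and $E$ inside a single simple normal crossings divisor --- is essential.
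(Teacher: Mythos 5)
First, a point of comparison: the paper does not actually prove this lemma --- it is quoted from Eisenstein \cite[Lemma 3.1]{ee} with no argument given --- so your proposal cannot be matched against an in-paper proof and has to stand on its own.

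Your overall architecture (arrange that $I_X\cdot\o_{A'}$ differs from $I_{X'}$ by an invertible factor, then run Bravo--Villamayor on $X'\subseteq A'$ compatibly with a fixed simple normal crossings divisor, and pull everything back through $\varphi_2$) is a reasonable reduction, and the ``main step'' is fine \emph{granted} the reduction. But the step you yourself flag as the main obstacle is a genuine gap, and the specific device you propose for closing it --- log-resolving the part $\fc$ of a primary decomposition $I_X\cdot\o_{A'}=I_{X'}\cap\fc$ supported on ${\rm Exc}(\varphi_1)$ --- cannot work. The decomposition is not multiplicative, and the discrepancy between $I_{X'}\cap\fc$ and $I_{X'}\cdot\fc$ survives every further blow-up. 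In the local model $I_X\cdot\o_{A'}=(x^2,xy)$ one has $I_{X'}=(x)$ and $\fc=(x^2,y)$, so $I_{X'}\cap\fc=(x^2,xy)$ while $I_{X'}\cdot\fc=(x^3,xy)$; the monomial valuation with $v(x)=1$, $v(y)=2$ is divisorial and takes the value $2$ on the first ideal and $3$ on the second, so their pullbacks to any log resolution remain distinct. Consequently $I_X\cdot\o_{\overline A}$ can never equal $(I_{X'}\cdot\o_{\overline A})\cdot\o_{\overline A}(-\varphi_2^*D)$ for your $D$ (in this model the correct cofactor would be $(x,y)$, not $\fc$). Since your argument uses nothing about $I_X\cdot\o_{A'}$ beyond the shape of its primary decomposition, it cannot be completed as structured.

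The identity $I_X\cdot\o_{\overline A}=I_{\overline X}\cdot\o_{\overline A}(-R)$ is precisely the hard content of the Bravo--Villamayor theorem: it is produced by running the constructive resolution algorithm on (the total transform of) $I_X$ itself, tracking the factorization at each blow-up, and it is not recoverable by principalizing an auxiliary ideal extracted from a primary decomposition. A correct proof along your lines must apply that machinery to the ideal $I_X\cdot\o_{A'}$ on the smooth model (equivalently, keep the factorization under control through the whole tower $\overline A\to A'\to A$), rather than to $X'$ alone plus a separately resolved $\fc$. As written, the proposal establishes the lemma only modulo an unproved --- and, with your choice of $\fc$, false --- reduction, so it does not constitute a proof.
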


The following lemma is essentially \cite[Lemma~4.4]{ee}.

\begin{lem}
\label{adjunction}
Let $X\subseteq A$ be a reduced equidimensional closed subscheme of a smooth variety 
$A$, and let $\varphi_A\colon \overline A\to A$ be a factorizing resolution of $X$ inside $A$.
Let $c=\codim (X, A)$ and $I_X\cdot\o_{\overline A}=I_{\overline X}\cdot \o_{\overline A}(-R_{X/A})$.
If $\varphi_A|_{\overline X}$ factors through the  blow-up along the ideal 
$\j ac_X$, then
$$(K_{\overline{A}/A}- cR_{X/A})|_{\overline X}=\hk_{{\overline X}/X}-J_{{\overline X}/X}.$$
\end{lem}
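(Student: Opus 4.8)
The plan is to reduce the statement to a local computation on $\overline{A}$ near a point of $\overline{X}$, using adjunction to compare canonical sheaves of $\overline{A}$ and $\overline{X}$, and the behavior of the relative differentials under the factorizing resolution. First I would set up coordinates: since $A$ is smooth and $X$ is equidimensional of codimension $c$, near a point of $\overline{X}$ the strict transform $\overline{X}$ is smooth and meets the exceptional locus (and hence $\operatorname{Supp}R_{X/A}$) transversally, so I can choose local coordinates $y_1,\dots,y_n$ on $\overline{A}$ (with $n=\dim A$) such that $\overline{X}=\{y_1=\dots=y_c=0\}$. The idea is then to trace through what $\varphi_A^*$ does to a local generator of $\Omega_A^c$ restricted along $X$, versus what it does after restricting to $\overline{X}$, and to identify the discrepancy between the two as exactly $K_{\overline{A}/A}-cR_{X/A}$ on one side and $\widehat{K}_{\overline{X}/X}-J_{\overline{X}/X}$ on the other.

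The key steps, in order: (1) Recall that $\Omega_A^n\cdot\cO_{\overline A}$ pulls back to $\cO_{\overline A}(-K_{\overline A/A})\cdot\Omega_{\overline A}^n$, and use the conormal exact sequence for $X\subseteq A$, namely $I_X/I_X^2\to \Omega_A|_X\to\Omega_X\to 0$, together with its analogue for $\overline X\subseteq\overline A$, to express $\Omega_X^c$ (resp.\ $\Omega_{\overline X}^n$) in terms of wedge powers of $\Omega_A|_X$ modulo the conormal bundle. (2) The relation $I_X\cdot\cO_{\overline A}=I_{\overline X}\cdot\cO_{\overline A}(-R_{X/A})$ says that a local generator $f_1,\dots,f_c$ of $I_X$ pulls back to $g_1,\dots,g_c$ times a common factor cutting out $R_{X/A}$, where $g_1,\dots,g_c$ generate $I_{\overline X}$; taking $df_i$ and restricting to $\overline X$ introduces exactly $R_{X/A}$ on each of the $c$ conormal generators, hence $cR_{X/A}$ after taking the top wedge of the conormal bundle — this is where the factor $c$ enters. (3) Combine (1) and (2) with the adjunction formula $\omega_{\overline X}=\omega_{\overline A}(\overline X)|_{\overline X}$ and its image-sheaf refinement: the image of $\varphi_X^*\Omega_X^c\to\Omega_{\overline X}^n$ (twisted appropriately by the normal bundle of $\overline X$ in $\overline A$) is governed on the one hand by $K_{\overline A/A}-cR_{X/A}$ restricted to $\overline X$, and on the other hand — since $\varphi_X$ factors through the blow-up of $\j ac_X$, so by Remark~\ref{lipman} through the Nash blow-up of $X$ — by $\widehat K_{\overline X/X}$, with the Jacobian correction $J_{\overline X/X}$ accounting for the difference between the Fitting-ideal (Nash) description and the image of $\Omega_X^c$ in $\omega_X$. (4) Since both sides are divisors supported on the exceptional locus and we have shown the associated invertible sheaves agree, conclude equality of divisors.

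The main obstacle I anticipate is step (3): correctly bookkeeping the twists by the normal/conormal bundles and the identification of $J_{\overline X/X}$ as precisely the defect between "image of $\Omega_X^c$ in $\omega_X$" and "quotient of $\Omega_X^c$ by torsion" (the Nash quotient). This is exactly the phenomenon already noted for locally complete intersection $X$, where the image of $\Omega_X^n\to\omega_X$ is $\j ac_X\cdot\omega_X$; here $X\subseteq A$ is lci in the smooth ambient $A$ along the locus where $\varphi_A$ is an isomorphism, but globally on $X$ one must be careful that $c=\codim(X,A)$ is the relevant codimension and that the embedding $X\hookrightarrow A$ realizes $\omega_X$ as $\mathcal{E}xt^c_{\cO_A}(\cO_X,\omega_A)$. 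A clean way to handle this is to pull everything back to $\overline A$ first, where $\overline X$ is smooth, and only at the end restrict to $\overline X$, so that the Nash blow-up/Fitting ideal comparison for $X$ is localized to the generic points where $X$ is already nonsingular. I would also remark that, because the claimed identity is of Cartier divisors on the smooth variety $\overline X$, it suffices to check it in codimension one on $\overline X$, which further simplifies the local analysis.
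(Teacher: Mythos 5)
The paper does not actually prove this lemma---it is quoted from Eisenstein \cite[Lemma~4.4]{ee}---so your proposal has to stand on its own, and as it stands it has two genuine gaps. First, your local model is the locally-complete-intersection one: you take ``a local generator $f_1,\dots,f_c$ of $I_X$'' and factor each $\varphi_A^*f_i$ as a common equation of $R_{X/A}$ times a generator of $I_{\overline X}$. But $X$ is only assumed reduced and equidimensional, so $I_X$ locally needs $r>c$ generators in general, and the ideal identity $I_X\cdot\cO_{\overline A}=I_{\overline X}\cdot\cO_{\overline A}(-R_{X/A})$ only gives $\varphi_A^*f_i=w\tilde f_i$ with the $\tilde f_i$ \emph{collectively} generating $I_{\overline X}$. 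For the same reason, the identification you lean on in step (3)---that the image of $\Omega_X^{\dim X}\to\omega_X$ is $\j ac_X\cdot\omega_X$, so that $J_{\overline X/X}$ is ``the defect between the Nash quotient and the image in $\omega_X$''---is an lci-only fact; for general $X$ that image is $\j ac_M|_X$ for a complete intersection $M\supseteq X$ (the paper's Lemma~\ref{em}), not $\j ac_X$. Since the whole point of the Mather setup is the non-lci case, and since your proposed reduction to codimension one on $\overline X$ does not help ($\varphi_X$-exceptional divisors can dominate the non-lci locus of $X$), this is not a detail one can wave away.

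Second, and more importantly, the computational heart of the lemma is never carried out. In coordinates $y_1,\dots,y_n$ on $\overline A$ with $I_{\overline X}=(y_1,\dots,y_c)$ and $\cO(-R_{X/A})=(w)$, the chain rule gives $\bigl(\partial\varphi^*f_i/\partial y_k\bigr)=\varphi^*\bigl(\partial f_i/\partial x_j\bigr)\cdot\bigl(\partial\varphi^*x_j/\partial y_k\bigr)$, which restricted to $\overline X$ has the block form $w\,[A\,|\,0]$ with $A$ of pointwise rank $c$. What must then be proved is the determinantal (Cramer/Laplace-type) identity asserting that $\det\bigl(\partial\varphi^*x_j/\partial y_k\bigr)|_{\overline X}$ times the ideal of $c\times c$ minors of $\varphi^*\bigl(\partial f_i/\partial x_j\bigr)|_{\overline X}$ (which computes $K_{\overline A/A}|_{\overline X}+J_{\overline X/X}$) equals $w^c$ times the ideal of maximal minors of the tangential block $\bigl(\partial\varphi^*x_j/\partial y_k\bigr)_{k>c}|_{\overline X}$ (which computes $cR_{X/A}|_{\overline X}+\hk_{\overline X/X}$). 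You explicitly list this bookkeeping as ``the main obstacle I anticipate'' rather than resolving it, so the proposal is a plausible plan of attack but not a proof. (There are also minor slips---$\Omega_X^c$ where $\Omega_X^{\dim X}$ is meant, and the adjunction formula written for a divisor rather than for codimension $c$---but these are cosmetic compared with the two issues above.)
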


\begin{prop}
\label{smoothrestriction}
 Let $X$  be a closed subvariety of the smooth variety $A$, with ${\rm codim}(X,A)=c$.
If $\tilde\a\subseteq \o_A$ is an ideal on $A$ whose restriction $\a=\tilde\a\cdot \o_X$  to $X$ is nonzero,
then for every $t\in{\mathbf R}_{\geq 0}$
$$\hj(X,\a^t)\subseteq \j(A, \tilde\a^t)\cdot\cO_X.$$
\end{prop}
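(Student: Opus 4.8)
The plan is to compute both ideals on one carefully chosen factorizing resolution, to pass from $X$ to $A$ via the adjunction formula of Lemma~\ref{adjunction}, and to control the resulting restriction map by a vanishing theorem. We may assume $c\geq1$ (the case $c=0$, i.e.\ $X=A$, is trivial), and since the asserted inclusion is an inclusion of sheaves on $X$ we may work locally and assume $A$ affine. Note that $X$ is reduced, irreducible, hence equidimensional, that $\j ac_X\neq0$, and that $X\not\subseteq V(\tilde\a)$ because $\a=\tilde\a\cdot\o_X\neq0$. The first step is to construct a factorizing resolution $\varphi_A\colon\overline A\to A$ of $X$ inside $A$ with three extra properties: $\varphi_A$ is a log resolution of $\tilde\a$; the induced morphism $\varphi_X:=\varphi_A|_{\overline X}\colon\overline X\to X$ is a log resolution of $\j ac_X\cdot\a$ (in particular it factors through the blow-up of $\j ac_X$, so that Lemma~\ref{adjunction} applies); and the union of $\overline X$, ${\rm Exc}(\varphi_A)$, ${\rm Supp}(R_{X/A})$ and ${\rm Supp}(\tilde\a\cdot\o_{\overline A})$ has simple normal crossings. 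Such a $\varphi_A$ can be built by fixing a log resolution $h\colon X_1\to X$ of $\j ac_X\cdot\a$ which is an isomorphism over the generic point of $X$ (all blow-ups having centers inside $V(\j ac_X\cdot\a)$), extending $h$ to a birational morphism $A'\to A$ from a smooth variety that is an isomorphism over the generic point of $X$ and a log resolution of $\tilde\a$, and then applying Lemma~\ref{singfac} with $E$ the simple normal crossing support of the exceptional locus together with the transforms of $\tilde\a$, of $\a$, and of (the pullback to $A'$ of) $\j ac_X$.

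Write $\tilde\a\cdot\o_{\overline A}=\o_{\overline A}(-\widetilde Z)$. Then $\a\cdot\o_{\overline X}=(\tilde\a\cdot\o_{\overline A})|_{\overline X}=\o_{\overline X}(-\widetilde Z|_{\overline X})$, so $Z_{\overline X/X}=\widetilde Z|_{\overline X}$, and since $\overline X$ is transverse to $\widetilde Z$ we also get $\lfloor t\widetilde Z\rfloor|_{\overline X}=\lfloor tZ_{\overline X/X}\rfloor$. Put
$$D':=K_{\overline A/A}-cR_{X/A}-\lfloor t\widetilde Z\rfloor,\qquad D:=K_{\overline A/A}-\lfloor t\widetilde Z\rfloor.$$
By Lemma~\ref{adjunction} we have $D'|_{\overline X}=\hk_{\overline X/X}-J_{\overline X/X}-\lfloor tZ_{\overline X/X}\rfloor$, hence $\hj(X,\a^t)=\varphi_{X*}\o_{\overline X}(D'|_{\overline X})$; and since $R_{X/A}$ is effective and $c\geq1$ we have $D'\leq D$, so $\varphi_{A*}\o_{\overline A}(D')\subseteq\varphi_{A*}\o_{\overline A}(D)=\j(A,\tilde\a^t)\subseteq\o_A$. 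Since $\overline X$ is transverse to ${\rm Supp}(D')$, tensoring the sequence $0\to I_{\overline X}\to\o_{\overline A}\to\o_{\overline X}\to0$ with the line bundle $\o_{\overline A}(D')$ yields a short exact sequence whose pushforward by $\varphi_A$ shows: if $R^1\varphi_{A*}\bigl(I_{\overline X}\otimes\o_{\overline A}(D')\bigr)=0$, then $\varphi_{A*}\o_{\overline A}(D')\to\varphi_{X*}\o_{\overline X}(D'|_{\overline X})$ is surjective. This last map is restriction of rational functions along $\overline X$ (legitimate since $\overline X$ is transverse to ${\rm Supp}(D')$ and its source consists of regular functions on $A$), so every local section of $\hj(X,\a^t)=\varphi_{X*}\o_{\overline X}(D'|_{\overline X})$ over $W\cap X$, for $W\subseteq A$ affine open, is the restriction to $X$ of a section of $\varphi_{A*}\o_{\overline A}(D')\subseteq\j(A,\tilde\a^t)$ over $W$, hence lies in $\j(A,\tilde\a^t)\cdot\o_X$. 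So it remains to prove the vanishing.

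To show $R^1\varphi_{A*}(I_{\overline X}\otimes\o_{\overline A}(D'))=0$ I would blow up $\overline X$: let $\mu\colon\widehat A\to\overline A$ be the blow-up of the smooth subvariety $\overline X$, with exceptional divisor $F$, and set $\psi:=\varphi_A\circ\mu$. A standard computation with $0\to\o_{\widehat A}(-F)\to\o_{\widehat A}\to\o_F\to0$ gives $R^j\mu_*\o_{\widehat A}(-F)=0$ for $j>0$ and $\mu_*\o_{\widehat A}(-F)=I_{\overline X}$, so by the projection formula $R^i\varphi_{A*}(I_{\overline X}\otimes\o_{\overline A}(D'))\cong R^i\psi_*\o_{\widehat A}(\mu^*D'-F)$ for all $i$. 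Using $K_{\widehat A/\overline A}=(c-1)F$ and $I_X\cdot\o_{\widehat A}=\o_{\widehat A}(-F-\mu^*R_{X/A})$ one rewrites
$$\mu^*D'-F=K_{\widehat A/A}-\lfloor c\,D_{I_X}+t\,D_{\tilde\a}\rfloor,$$
where $D_{I_X}$, $D_{\tilde\a}$ are the effective divisors on $\widehat A$ with $I_X\cdot\o_{\widehat A}=\o_{\widehat A}(-D_{I_X})$ and $\tilde\a\cdot\o_{\widehat A}=\o_{\widehat A}(-D_{\tilde\a})$. As $\overline X$ meets ${\rm Exc}(\varphi_A)\cup{\rm Supp}(R_{X/A})\cup{\rm Supp}(\widetilde Z)$ transversally, $\psi$ is a log resolution of $I_X\cdot\tilde\a$, and the classical Local Vanishing Theorem on the smooth variety $A$ (see \cite[\S9.4]{laz}) gives $R^i\psi_*\o_{\widehat A}(\mu^*D'-F)=0$ for all $i>0$. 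This completes the proof.

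The main obstacle is the construction of the factorizing resolution $\varphi_A$ in the first step: one and the same resolution must log-resolve $\tilde\a$ on $\overline A$ and $\j ac_X\cdot\a$ on $\overline X$ and be of the type to which Lemma~\ref{adjunction} applies. Once this is arranged the identification of the two ideals is formal, and the vanishing — which looks awkward because $I_{\overline X}$ fails to be invertible when $c\geq2$ — reduces, after blowing up $\overline X$, to ordinary local vanishing for the pair of ideals $I_X$ and $\tilde\a$ on the smooth variety $A$.
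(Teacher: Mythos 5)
Your argument is correct and follows essentially the same route as the paper's proof: build a factorizing resolution of $X$ inside $A$ via Lemma~\ref{singfac} that simultaneously log-resolves $\tilde\a$ and (on the strict transform) $\j ac_X\cdot\a$, identify $\hj(X,\a^t)$ with $\varphi_{X*}\o_{\overline X}(D'|_{\overline X})$ by Lemma~\ref{adjunction}, and obtain surjectivity of the restriction map by blowing up $\overline X$ and reducing to the classical Local Vanishing Theorem for the ideals $I_X$ and $\tilde\a$ on the smooth variety $A$. The only cosmetic difference is your detour through ``extending'' a log resolution $X_1\to X$ to $A$, which is unnecessary (and not literally possible in general): as in the paper, it suffices to log-resolve the pullbacks of $\j ac_X$ and $\a$ to $A$ before applying Lemma~\ref{singfac}, which your inclusion of those pullbacks in $E$ already accomplishes.
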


\begin{proof}
Let $\b\subseteq \o_A$ be the pull-back  of $\j ac_X$ by the canonical surjection
$\o_A\to \o_X$. We begin by choosing a log resolution
$\varphi_1\colon A'\to A$ of $\b\cdot \tilde\a$.
Let $ E$ be the divisor on $A'$ such that $\o_{A'}(-E)=\tilde\a\b  \cdot\o_{A'}$.
Applying Lemma \ref{singfac}, we obtain a factorizing resolution $\varphi_A\colon
\overline A\to A$ of $X$ inside $A$, that is also a log resolution of $\b\cdot\tilde\a$.
Let $R_{X/A}$ be the divisor on $\overline A$ such that
$$I_X\cdot\o_{\overline A}=I_{\overline X}\cdot\o_{\overline A}(-R_{X/A}),$$
where $\overline{X}$ is the strict transform of $X$ in $\overline{A}$.
As $\varphi_A$ factors through $\varphi_1$, the morphism 
$\varphi_X:=\varphi_A|_{\overline X}$ is a log resolution of $\j ac_X$.
Let $\tilde\a\cdot \o_{{\overline A}}=\o_{{\overline A}}(-Z_{{\overline A}/A})$ and $D=K_{{\overline A}/A}-\lfloor t Z_{{\overline A}/A}\rfloor-cR_{X/A}$.
The exact sequence
$$0\to I_{\overline X}\cdot\o_{\overline A}(D)\to \o_{\overline A}(D)\to \o_{\overline X}(D\vert
_{\overline{X}})
\to 0$$
 yields the exact sequence
\begin{equation}
\label{exact} 
0\to{ \varphi_{A}}_*( I_{\overline X}\cdot\o_{\overline A}(D))\to {\varphi_{A}}_*( \o_{\overline A}(D))\to {\varphi_{X}}_*( \o_{\overline X}(D|_{\overline X}))
\end{equation}
$$ \to {R^1\varphi_{A}}_*( I_{\overline X}\cdot\o_{\overline A}(D))
.$$
We claim that the last term in this sequence is zero. 
Indeed, let $\psi\colon \tilde A\to \overline A$ be the blow-up along $\overline X$ 
with exceptional divisor $T$. 
If ${\mathcal F}=\o_{\tilde A}(-T+\psi^*D)$, then the projection formula gives $\psi_*{\mathcal F}=
 I_{\overline X}\cdot\o_{\overline A}(D)$.
For $p>0$, we have 
$$R^p\psi_*( \mathcal F)=\cO_{\overline{A}}(D)\otimes R^p\psi_*(\cO_{\tilde A}(-T))=0,\ \ 
   R^p(\varphi_{A}\circ \psi)_* ( \mathcal F)=0.$$
The second vanishing follows from the Local Vanishing Theorem for multiplier ideals
(see \cite[Theorem~9.4.1]{laz}), using the fact that if $I_X\cdot \cO_{\tilde{A}}=\cO_{\tilde{A}}(-F)$,
then we can write
$$-T+\psi^*(D)=K_{\tilde{A}/A}-\lfloor cF+t\psi^*(Z_{\overline{A}/A})\rfloor.$$
Using the Leray spectral sequence
   $$E_2^{p, q}={R^p\varphi_{A}}_*R^q\psi_*(\mathcal F)
  \Rightarrow R^{p+q}(\varphi_{A}\circ \psi)_* ( \mathcal F),$$
  we obtain $R^1\varphi_{A*}(\psi_*{\mathcal F})=0$, as claimed.
  
  Since $R_{X/A}$ is effective, we have by definition ${\varphi_A}_*( \o_{\overline A}(D))\subseteq \j (A, \tilde\a^t)$,
hence the exact sequence (\ref{exact}) gives
$$ {\varphi_X}_*( \o_{\overline X}(D|_{\overline X}))\subseteq \j (A, \tilde\a^t)\cdot\cO_X.$$
By Lemma~\ref{adjunction} we have 
$${\varphi_X}_*( \o_{\overline X}(D|_{\overline X}))={\varphi_X}_*\left( \o_{\overline X}(\hk_{{\overline X}/X}
-J_{{\overline X}/X}-\lfloor t Z_{\overline {X}/X}\rfloor \right)=\hj (X, \a^t),$$
which completes the proof of the proposition.
\end{proof}

\begin{cor}\label{cor_ideal} Given a variety $X$, a nonzero ideal $\a$ on $X$, and
$t\in{\mathbf R}_{\geq 0}$, the following hold:
\begin{enumerate}
\item  The Mather multiplier ideal $\hj(X,\a^t)$ is  an ideal of $\o_X$.
\item  The ideal $\hj(X,\a^t) $ is integrally closed.
\item If $\nu\colon X^{\nu}\to X$ is the normalization of $X$, then the integral closure 
$\overline{\j ac_X}$
  of $\j ac_X$ satisfies
  $$\overline{\j ac_X}\subseteq (\cO_X:  \nu_*\cO_{X^{\nu}}).$$
\end{enumerate}  
\end{cor}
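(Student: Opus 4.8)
The plan is to deduce all three parts from the restriction statement of Proposition~\ref{smoothrestriction} together with the standard description of the integral closure of an ideal by means of a resolution (for a proper birational $g\colon W\to X$ with $W$ normal and $\mathfrak b\cdot\o_W$ invertible one has $\overline{\mathfrak b}=g_*(\mathfrak b\cdot\o_W)\cap\o_X$, the intersection being unnecessary when $X$ itself is normal). Part~(1) is local on $X$, since the Mather multiplier ideal commutes with restriction to open subsets (the Jacobian ideal does, and pushforward commutes with open restriction). So one may assume $X$ is affine and fix a closed embedding $X\hookrightarrow\bA^N=:A$. Taking $\tilde\a\subseteq\o_A$ to be the preimage of $\a$ under $\o_A\twoheadrightarrow\o_X$, we get $\tilde\a\cdot\o_X=\a\neq0$, and Proposition~\ref{smoothrestriction} gives $\hj(X,\a^t)\subseteq\j(A,\tilde\a^t)\cdot\o_X\subseteq\o_X$, the last step because $\j(A,\tilde\a^t)$ is an ideal of $\o_A$.

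For part~(2), by (1) the sheaf $\hj(X,\a^t)$ is a nonzero coherent ideal of $\o_X$, so one can choose a log resolution $f\colon Y\to X$ of $\j ac_X\cdot\a\cdot\hj(X,\a^t)$; this is simultaneously a log resolution of $\j ac_X\cdot\a$, and $\hj(X,\a^t)\cdot\o_Y$ is invertible. Put $G=\hk_{Y/X}-J_{Y/X}-\lfloor tZ_{Y/X}\rfloor$, so $\hj(X,\a^t)=f_*\o_Y(G)$ by the independence of $\hj$ on the log resolution. Every local section of $f_*\o_Y(G)$, viewed inside $k(Y)$, is a section of $\o_Y(G)$, hence $\hj(X,\a^t)\cdot\o_Y\subseteq\o_Y(G)$. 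Since $Y$ is smooth and $\hj(X,\a^t)\cdot\o_Y$ is invertible, the integral closure of $\hj(X,\a^t)$ is $f_*\big(\hj(X,\a^t)\cdot\o_Y\big)\cap\o_X\subseteq f_*\o_Y(G)=\hj(X,\a^t)$, so $\hj(X,\a^t)$ is integrally closed.

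For part~(3), pick a log resolution $f\colon Y\to X$ of $\j ac_X$; by Remark~\ref{lipman} it factors through the Nash blow-up, and since $Y$ is normal and $f$ is birational it factors through the normalization, $f=\nu\circ f'$ with $f'\colon Y\to X^\nu$. Because $X^\nu$ is normal and $\j ac_X\cdot\o_Y=\o_Y(-J_{Y/X})$ is invertible, the ideal $f'_*\o_Y(-J_{Y/X})\subseteq\o_{X^\nu}$ is exactly the integral closure of $\j ac_X\cdot\o_{X^\nu}$ inside $\o_{X^\nu}$; in particular $\overline{\j ac_X}\subseteq f'_*\o_Y(-J_{Y/X})$ inside $k(X)=k(X^\nu)$, since an element of $\o_X$ integral over $\j ac_X$ is integral over $\j ac_X\cdot\o_{X^\nu}$. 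Now $\hk_{Y/X}$ is effective, so $\o_Y(-J_{Y/X})\subseteq\o_Y(\hk_{Y/X}-J_{Y/X})$; pushing forward to $X$ and invoking part~(1) with $\a=\o_X$, $t=0$ gives $\nu_*\big(f'_*\o_Y(-J_{Y/X})\big)\subseteq f_*\o_Y(\hk_{Y/X}-J_{Y/X})=\hj(X,\o_X^0)\subseteq\o_X$. Thus $\nu_*\big(f'_*\o_Y(-J_{Y/X})\big)$ is an ideal of $\nu_*\o_{X^\nu}$ contained in $\o_X$, hence lies in $(\o_X:\nu_*\o_{X^\nu})$; combined with $\overline{\j ac_X}\subseteq\nu_*\big(f'_*\o_Y(-J_{Y/X})\big)$ this yields $\overline{\j ac_X}\subseteq(\o_X:\nu_*\o_{X^\nu})$.

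The only nonformal input is Proposition~\ref{smoothrestriction}, used in part~(1); granting that, (2) and (3) are manipulations with the resolution description of integral closure. The step requiring an actual idea is (3): one must factor the resolution through the normalization and notice that the \emph{effectivity} of the Mather discrepancy divisor forces the integral closure of $\j ac_X\cdot\o_{X^\nu}$ — a priori only an ideal of $\o_{X^\nu}$ — to land inside $\o_X$, after which it is automatically an ideal of both $\o_X$ and $\nu_*\o_{X^\nu}$, i.e.\ lies in the conductor. The subtlety to keep in mind is precisely that the integral closure of an ideal computed inside $\o_X$ differs in general from the one computed inside $\o_{X^\nu}$ by an intersection with $\o_X$; what makes (3) work is that here these two coincide.
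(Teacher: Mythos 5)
Your proposal is correct and follows essentially the same route as the paper: part (1) via Proposition~\ref{smoothrestriction} after a local embedding into affine space, part (2) from the standard fact that a pushforward $f_*\o_Y(E)$ from a normal model that lands in $\o_X$ is integrally closed, and part (3) from the chain $\overline{\j ac_X}\subseteq \varphi_*\o_Y(-J_{Y/X})\subseteq \hj(X,\o_X)\subseteq\o_X$ together with the observation that $\varphi_*\o_Y(-J_{Y/X})$ is a $\nu_*\o_{X^\nu}$-module. Your treatment is just a more explicit unpacking (e.g.\ principalizing $\hj(X,\a^t)$ in (2), and working on $X^\nu$ in (3)) of the same argument.
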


\begin{proof}
(1) follows from Proposition \ref{smoothrestriction}, since $X$ can be locally embedded in a smooth variety, and the result is known (and easy to prove) for usual multiplier ideals
on smooth varieties.
For (2), it is enough to note that if $f\colon Y\to X$ is a birational morphism, with $Y$ normal,
and $E$
is a Weil divisor on $Y$ such that
${\mathcal I}:=
f_*\o_Y(E)$ is an ideal of $\cO_X$,
then ${\mathcal I}$ is integrally closed.

In order to show (3), let $\varphi\colon Y\to X$ be a log resolution of $\j ac_X$. We have the inclusions
$$\overline{\j ac_X}\subseteq \varphi_*\o_Y(-J_{Y/X})\subseteq \varphi_*\o_Y(\hk_{Y/X}-J_{Y/X})=\hj (X,\o_X)\subseteq \o_X.$$
Since $\varphi$
factors through $\nu$, it follows that $\varphi_*\o_Y(-J_{Y/X})$ is a $\nu_*\o_{X^{\nu}}$-module,
and we deduce the inclusion in (3).
\end{proof}

\begin{exmp}
  If $X=\spec k[x^2, x^3]$, then
  $$\hj(X,\o_X)=(x^2)\subset k[x^2,x^3].$$
  Note that $X$ is a curve with a cusp at the origin and has the normalization 
  $\nu\colon  \overline{X}=\spec k[x] \to X$.
  The morphism $\nu $ is a resolution of singularities and factors through the Nash blow-up
  (since the normalization of a curve factors through every blow-up).
  As $\nu^*\Omega_X$ is generated by $dx^2, dx^3$,
   the image of the canonical map
  $\nu^* \Omega_X\to \Omega_{\overline{X}}$
  is generated by $xdx$ and $x^2dx$, hence
  $\o_{\overline X}(-\hk_ {\overline X/X})=(x)$.
  On the other hand, $\j ac_X=(x^3,x^4)\subset k[x^2, x^3]$ and
  $\j ac_X\o_{\overline X}=(x^3)\subset k[x]$.
  Therefore $\o_{\overline X}(\hk_ {\overline X/X}-J_{\overline X/X})=(x^2)\subset k[x]$,
  so that  $\hj(X,\o_X)=(x^2)$, as an ideal of $k[x^2, x^3]$.
\end{exmp}

\section{The local vanishing theorem and applications}

We first recall that for every variety $X$ of dimension $n$ we have 
a canonical morphism $\eta_X\colon \Omega_X^n\to\omega_X$, where $\omega_X$ is the canonical sheaf
of $X$. This is clear when $X$ is normal, since in that case $\omega_X\simeq
i_*(\Omega_{X_{\rm reg}}^n)$, where $i\colon X_{\rm  reg}\hookrightarrow X$ is the inclusion of the smooth locus. In the general case, let $\nu\colon X'\to X$ be the normalization, so we 
have  $\eta_{X'}\colon \Omega_{X'}^n\to\omega_{X'}$. 
Since $\nu$ is finite and surjective, we have an isomorphism $\nu_*(\omega_{X'})
\simeq {\mathcal Hom}_{\cO_X}(\nu_*(\cO_{X'}),\omega_X)$, and the inclusion
$\cO_X\hookrightarrow \nu_*(\cO_{X'})$ induces a canonical morphism
$\nu_*(\omega_{X'})\to\omega_X$. We then obtain $\eta_X$ as the composition
$$\Omega^n_X\to \nu_*(\Omega_{X'}^n)\overset{\nu_*(\eta_{X'})}\to \nu_*(\omega_{X'})
\to\omega_X.$$

The following is a consequence of \cite[Proposition~9.1]{e-Mus2}.

\begin{lem}
\label{em}
  Let $X\subseteq {\mathbf A}^N$ be a closed subvariety of dimension $n$, defined by 
  the ideal $I_X=(f_1,\ldots, f_d)$, and let $c=N-n$.
  If the subscheme $M$ defined by $c$ functions $f_{i_1},\ldots, f_{i_c}$ 
has pure codimension $c$, and
$M=X$ at the generic point of $X$,
then there exists an isomorphism of $\omega_X$ with an ideal
of $\o_X$
  $$\omega_X\simeq \left( (I_M :I_X)+I_X\right)/I_X.$$
  Under this isomorphism, the image of the canonical map $\eta_X\colon \Omega^n_X\to 
  \omega_X$ corresponds to $\j ac_M|_X.$
\end{lem}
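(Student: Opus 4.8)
The plan is to work locally and reduce to the case where $M$ is a complete intersection, exploiting the well-understood duality theory in that setting. Since the statement is local on $X$, I would fix a point and replace $A^N$ by a suitable affine neighborhood. The subscheme $M \subseteq A^N$ cut out by $f_{i_1},\dots,f_{i_c}$ is a complete intersection of pure codimension $c$, so it is Cohen--Macaulay and Gorenstein, and its canonical sheaf is trivialized by the Koszul complex: $\omega_M \simeq \cO_M$, with the distinguished generator coming from the top wedge of the differentials $df_{i_1},\dots,df_{i_c}$ (up to the usual sign convention). The first step is to record this identification, together with the standard fact that for a closed embedding $X \hookrightarrow M$ of the same dimension $n$, one has $\omega_X \simeq {\mathcal Hom}_{\cO_M}(\cO_X, \omega_M)$ by Grothendieck duality for the finite morphism $X \to M$; combined with $\omega_M \simeq \cO_M$ this gives $\omega_X \simeq {\mathcal Hom}_{\cO_M}(\cO_X, \cO_M)$, which in terms of ideals of the coordinate ring is exactly $\left((I_M : I_X) + I_X\right)/I_X$ — here the hypothesis that $M = X$ at the generic point of $X$ guarantees that $\cO_X \to \cO_M$ localized at the generic point is an isomorphism, so the conductor-type ideal $(I_M:I_X)$ is nonzero and the module $\omega_X$ is identified with an honest (fractional, then integral) ideal of $\cO_X$ rather than something supported on a proper subscheme. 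This is where I would cite \cite[Proposition~9.1]{e-Mus2}, since that reference presumably sets up precisely this dictionary.

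The second and main step is to track the canonical map $\eta_X \colon \Omega_X^n \to \omega_X$ through these identifications and show its image is $\j ac_M|_X$. Here $\j ac_M$ is the Jacobian ideal of the complete intersection $M$, which for a complete intersection is generated by the $c \times c$ minors of the Jacobian matrix $(\partial f_{i_j}/\partial x_k)$ — equivalently, it measures how $df_{i_1} \wedge \dots \wedge df_{i_c} \wedge dx_{k_1} \wedge \dots \wedge dx_{k_n}$ compares to $dx_1 \wedge \dots \wedge dx_N$ over $\cO_M$. The key computation: the canonical map $\Omega_M^n \to \omega_M \simeq \cO_M$ sends a wedge of $n$ differentials of coordinate functions to the coefficient expressing $(df_{i_1} \wedge \dots \wedge df_{i_c}) \wedge (\text{that wedge})$ in terms of $dx_1 \wedge \dots \wedge dx_N$; running over all such wedges, the image is precisely $\j ac_M \subseteq \cO_M \simeq \omega_M$. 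Restricting to $X$ and using that $\Omega_X^n$ is the quotient of $\Omega_M^n \otimes \cO_X$ (more precisely, using the surjection $\Omega_M^n|_X \to \Omega_X^n$ coming from the conormal sequence of $X \subseteq M$, which is an isomorphism generically on $X$), we get that the image of $\eta_X$ is $\j ac_M \cdot \cO_X = \j ac_M|_X$ inside $\omega_X \simeq \left((I_M:I_X)+I_X\right)/I_X$. I would be careful that the two sources of "the image of $\eta$": via $X \hookrightarrow A^N$ directly, versus via the factorization $X \hookrightarrow M \hookrightarrow A^N$, agree — this is a compatibility of the canonical maps $\Omega_X^n \to \omega_X$, which holds because $\eta$ is functorial and $\Omega_M^n \to \omega_M$ restricts correctly.

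The main obstacle I anticipate is the bookkeeping in the second step: making the identification $\omega_M \simeq \cO_M$ completely explicit (getting the right generator, with correct signs, from the Koszul resolution of $\cO_M$ over $\cO_{A^N}$) and then chasing $\eta$ through it without losing track of which generator of $\omega_M$ corresponds to $1 \in \cO_M$. A clean way to avoid sign headaches is to phrase everything via the "fundamental class" / residue symbol: the generator of $\omega_M$ is $\operatorname{Res}\left[\begin{smallmatrix} dx_1 \wedge \dots \wedge dx_N \\ f_{i_1}, \dots, f_{i_c}\end{smallmatrix}\right]$-type expression, and then the statement about $\eta_X$ becomes a standard residue computation. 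Since \cite[Proposition~9.1]{e-Mus2} already contains the relevant duality dictionary, I expect the proof to mostly consist of invoking that proposition for the complete intersection $M$ and then restricting along $X \hookrightarrow M$, with the genericity hypothesis ensuring that restriction is injective on the relevant ideals; the actual content reduces to the classical identification of $\eta$ for a complete intersection with its Jacobian ideal, which I would state as the core lemma and prove by the minors computation above.
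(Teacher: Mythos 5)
Your proposal is correct and takes essentially the same route as the paper, which in fact offers no proof of this lemma at all but simply defers to \cite[Proposition~9.1]{e-Mus2}; the combination you describe --- duality for the closed embedding $X\hookrightarrow M$ into the complete intersection $M$, the Koszul/residue trivialization $\omega_M\simeq\cO_M$, and the computation that the generators $dx_{k_1}\wedge\cdots\wedge dx_{k_n}$ of $\Omega_X^n$ map to the complementary $c\times c$ minors of the Jacobian of $(f_{i_1},\ldots,f_{i_c})$ --- is precisely how that cited result is established. The one step worth making explicit is the passage from ${\mathcal Hom}_{\cO_M}(\cO_X,\omega_M)=(I_M:I_X)/I_M$ to the stated ideal $\left((I_M:I_X)+I_X\right)/I_X$ of $\cO_X$: the natural surjection from the former onto the latter is an isomorphism at the generic point of $X$ (by the hypothesis that $M=X$ there), so its kernel $\left((I_M:I_X)\cap I_X\right)/I_M$ is torsion and hence vanishes because $\omega_X$ is torsion-free --- which is the precise form of the genericity remark you gesture at.
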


\begin{rem}
\label{general}
Suppose that $X\subseteq {\mathbf A}^N$ is as in Lemma~\ref{em}. If 
$F_i:=\sum_{j=1}^d a_{ij}f_j$, with $a_{ij}$ general elements of $k$, then 
$I_X=(F_1,\ldots, F_d) $ and every set of distinct $c$ elements $F_{i_1},\ldots, F_{i_c}$ 
defines a subscheme $M$ that satisfies the conditions in Lemma \ref{em}.
\end{rem}

\begin{lem}
\label{keylemma}
For every $n$-dimensional affine variety $X$,
there exist a non-zero regular function $h$ on $X$ and ideals $I, G\subseteq \o_X$
such that there is a surjective morphism $\Omega^n_X\to I$ and
we have
\begin{equation}\label{eq_keylemma}
h\cdot \j ac_X=I\cdot G.
\end{equation}
\end{lem}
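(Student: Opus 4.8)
The plan is to reduce the statement to the situation of Lemma~\ref{em}, where the image of $\eta_X\colon \Omega_X^n\to\omega_X$ is explicitly identified with $\j ac_M|_X$ for a suitable complete intersection $M\supseteq X$ meeting $X$ generically along $X$. First I would embed the affine variety $X$ as a closed subvariety of some ${\mathbf A}^N$, write $I_X=(f_1,\ldots,f_d)$, and use Remark~\ref{general} to replace the generators by general linear combinations $F_1,\ldots,F_d$ so that any $c=N-n$ of them cut out a complete intersection $M$ satisfying the hypotheses of Lemma~\ref{em}. Lemma~\ref{em} then gives an isomorphism $\omega_X\simeq \bigl((I_M:I_X)+I_X\bigr)/I_X$ under which the image of $\eta_X$ becomes $\j ac_M|_X$. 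Since $M$ is a complete intersection, $\omega_M$ is a line bundle, and the image of $\Omega_M^n\to\omega_M$ is $\j ac_M\cdot\omega_M$; in fact over the affine $M$ the relevant sheaf $\omega_M$ is free, so we get a surjection $\Omega_X^n\twoheadrightarrow \j ac_M|_X=:I$, which is the first of the two requirements. (One must be slightly careful that the canonical map $\Omega_X^n\to\omega_X$ being considered is the $\eta_X$ of the previous paragraph; it is, since that map factors compatibly through $\Omega_M^n$.)

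For the identity \eqref{eq_keylemma}, the idea is to compare $\j ac_X$ and $\j ac_M$ along $X$. The key point is that $X$ is defined inside $M$ by the extra equations $F_{j}$, $j\notin\{i_1,\ldots,i_c\}$, and that $X$ and $M$ agree at the generic point of $X$, so the conormal sheaf $I_X/I_X^2$ restricted to $X$ equals the conormal sheaf of $M$ in ${\mathbf A}^N$ restricted to $X$ up to a piece supported on a proper closed subset. Concretely I would use the conormal/second fundamental exact sequence $I_M/I_M^2\to \Omega_{{\mathbf A}^N}|_X\to\Omega_X\to 0$ together with the analogous one for $M$, and compare zeroth Fitting ideals: taking $\mathrm{Fitt}_0$ and using that Fitting ideals behave well with respect to right exact sequences, one obtains a relation of the form $h\cdot \j ac_X=\j ac_M|_X\cdot G$ for some nonzero divisor $h$ on $X$ and some ideal $G\subseteq\o_X$. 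Here $h$ accounts for the discrepancy between the two conormal modules away from the locus where $M=X$, and $G$ records the ideal generated by the maximal minors of the Jacobian matrix of the extra equations $F_j$ (relative to a local trivialization of $\omega_M$). Setting $I=\j ac_M|_X$ gives exactly $h\cdot\j ac_X=I\cdot G$ as required; note $h\neq 0$ since $M=X$ generically.

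The main obstacle I anticipate is making the comparison in the previous paragraph precise: one has to control exactly how the Jacobian (Fitting) ideal of $X$ differs from that of the complete intersection $M$ along $X$, and extract a clean product decomposition $h\,\j ac_X=I\cdot G$ rather than merely an inclusion in one direction. The cleanest route is probably to argue locally and choose coordinates so that the $(N-n)\times(N-n)$ minor of the Jacobian of $(F_{i_1},\ldots,F_{i_c})$ coming from a fixed set of $c$ variables is the local generator of $\j ac_M$ (a unit times it trivializes $\omega_M$ near the generic point of $X$), express $\j ac_X$ as generated by all maximal minors of the full $d\times N$ Jacobian, and then use multilinear (Plücker-type) expansions of these minors along the chosen $c$ columns to factor out that distinguished minor; the ``leftover'' cofactors generate $G$, and clearing denominators introduces the function $h$. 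One should also double-check the global gluing: since $\j ac_X$, $\j ac_M$, and the surjection $\Omega_X^n\to\omega_X$ are all globally defined on the affine $X$, and $\omega_M$ is globally free on the affine $M$, the locally constructed $h$, $I$, $G$ patch to the global objects claimed in the statement. Finally one should verify the surjectivity of $\Omega_X^n\to I$ is genuinely onto (not just a map with image $I$), which is immediate once $I$ is defined as that image.
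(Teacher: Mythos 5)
Your first step --- embedding $X\subseteq {\mathbf A}^N$, taking general combinations $F_1,\ldots,F_d$ as in Remark~\ref{general}, and using Lemma~\ref{em} to produce a surjection $\Omega_X^n\twoheadrightarrow I:={\j ac}_{M}\vert_X$ for one complete intersection $M$ --- is exactly how the paper begins. But the second half, where you try to extract the product decomposition $h\cdot{\j ac}_X=I\cdot G$ by comparing conormal sequences, Fitting ideals, and Pl\"ucker expansions relative to the single $M$, has a genuine gap, and you have in fact identified it yourself as ``the main obstacle.'' The problem is twofold. First, a $c\times c$ minor of the full $d\times N$ Jacobian whose rows correspond to equations $F_j$ with $j$ outside your chosen index set is not related to the minors of the Jacobian of $M$ by any expansion ``along the chosen $c$ columns'': those are different rows, and no multilinear identity factors one out of the other. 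Second, even generically, $\,{\j ac}_M$ is an ideal generated by many minors, not a single distinguished one, so ``factoring out the distinguished minor'' does not make sense away from a dense open set, and Fitting-ideal functoriality only yields inclusions, never the asserted equality of products. Your descriptions of what $h$ and $G$ should be are heuristics with no mechanism behind them.

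The missing ideas are the following two, which constitute the paper's actual argument. (a) Run Lemma~\ref{em} for \emph{every} $c$-element subset $J\subseteq\{1,\ldots,d\}$ simultaneously: each gives a complete intersection $M_J$ and an embedding $\alpha_J\colon\omega_X\hookrightarrow\cO_X$ carrying the image of $\eta_X\colon\Omega_X^n\to\omega_X$ onto ${\j ac}_{M_J}\vert_X$; moreover ${\j ac}_X=\sum_J{\j ac}_{M_J}\vert_X$, since the $c\times c$ minors of the full Jacobian are exactly the minors of the various $c\times N$ Jacobians of the $M_J$. (b) Any two embeddings of the ideal $\alpha_{J_0}(\omega_X)$ into the domain $\cO(X)$ differ by multiplication by a fraction $b_J/a_J$ (an $R$-linear map ${\mathfrak q}\to R$ on an ideal of a domain is multiplication by an element of the fraction field). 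After arranging a common denominator $h$ for all $J$, one gets the genuine scaling relations $h\cdot{\j ac}_{M_J}\vert_X=b_J\cdot I$, and summing over $J$ gives $h\cdot{\j ac}_X=I\cdot G$ with $G=\sum_J(b_J)$. It is this ``all the ${\j ac}_{M_J}\vert_X$ are images of the same $\omega_X$ under proportional embeddings'' observation --- not any linear-algebra manipulation of minors --- that produces a clean product rather than a one-sided inclusion. I would encourage you to redo the second half along these lines.
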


\begin{proof}
We apply Lemma~\ref{em} using the notation in Remark~\ref{general}.
For every subset $J\subseteq\{1,\ldots,d\}$ with $c$ elements, we obtain
a subscheme $M_J$ of ${\mathbf A}^N$ defined by $(F_i\mid i\in J)$, that satisfies
the conditions in Lemma~\ref{em}. In particular, we obtain an isomorphism
$\alpha_J\colon\omega_X\to{\mathfrak q}_J\subseteq\cO_X$ that maps the image of
$\Omega_X^n\to \omega_X$ onto ${\j ac}_{M_J}\vert_X$. 

Let us fix a subset $J_0$ as above, so
$\alpha_{J_0}$ induces a surjective map from $\Omega_X^n$ onto 
$I={\j ac}_{M_{J_0}}\vert_X$. Note that the isomorphism $\alpha_J\circ
\alpha_{J_0}^{-1}\colon {\mathfrak q}_{J_0}\to {\mathfrak q}_J$ is given by $\alpha_J\circ\alpha_{J_0}(u)=
\frac{b_J}{a_J}u$ for some nonzero $a_J,b_J\in\cO(X)$ (if ${\mathfrak q}$ is an ideal in a domain $R$,
and $\varphi\colon {\mathfrak q}\to R$ is an $R$-linear map, then for every nonzero $u,v\in {\mathfrak q}$ we have
$\frac{\varphi(u)}{u}=\frac{\varphi(u)v}{uv}=\frac{\varphi(uv)}{uv}=\frac{u\phi(v)}{uv}=\frac{\varphi(v)}{v}$).
Furthermore, after multiplying each $a_J$ by a suitable factor,
we may assume that all $a_J$ are equal, and we denote the common value by $h$. 

In particular, by considering the image of $\Omega_X^n$, we deduce
$h\cdot {\j ac}_{M_J}\vert_X=b_J\cdot I$ for every $J$. Since
${\j ac}_X=\sum_J{\j ac}_{M_J}\vert_X$, we obtain (\ref{eq_keylemma}) by taking
$G=\sum_J(b_J)$. 
 \end{proof}
 
 \begin{rem}
 This lemma gives another proof for Remark \ref{lipman}.
 \end{rem}
 
 The following theorem gives an analogue for the Local Vanishing Theorem for multiplier ideals
 (see \cite[Theorem~9.4.1]{laz}) in the context of Mather multiplier ideals. 

\begin{thm} 
\label{vanishing}
Let $\a$ be a nonzero ideal on the variety $X$, and let $t\in{\mathbf R}_{\geq 0}$.
If  $f\colon Y\to X$ is a log-resolution of $\a\cdot\j ac_X$, and we put 
$\a\cdot \o_Y=\o_Y(-Z_{Y/X})$, then
$$R^if_*\left(\o_Y(\hk_{Y/X}-J_{Y/X}-\lfloor tZ_{Y/X}\rfloor)\right)=0\ \ \text{for all}\,\,\, i>0.$$
\end{thm}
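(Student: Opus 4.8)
The plan is to reduce the statement to the Local Vanishing Theorem for usual multiplier ideals on a smooth variety, via the factorizing-resolution machinery already used in the proof of Proposition~\ref{smoothrestriction}. The question is local on $X$, so we may assume $X$ is affine and, by Lemma~\ref{keylemma}, choose a nonzero regular function $h$ and ideals $I,G\subseteq\o_X$ with a surjection $\Omega^n_X\twoheadrightarrow I$ and $h\cdot\j ac_X=I\cdot G$. Embed $X$ as a closed subvariety of a smooth affine variety $A$ with $\codim(X,A)=c$, and let $\b\subseteq\o_A$ be the preimage of $\j ac_X$. First I would pick a log resolution $\varphi_1\colon A'\to A$ of $\b\cdot\widetilde{\a}$ (where $\widetilde\a\subseteq\o_A$ lifts $\a$), refine it by $g\colon Y\to A'$ so that $Y$ also dominates the given resolution $f$ of $\a\cdot\j ac_X$ (using that $Y\to X$ extends to $Y\dashrightarrow A$ after further blow-ups, or more cleanly: replace $f$ from the start by a common log resolution of $f$ and $\varphi_1|$-pullbacks), and then apply Lemma~\ref{singfac} to obtain a factorizing resolution $\varphi_A\colon\overline A\to A$ of $X$ inside $A$ that simultaneously dominates $f$ and is a log resolution of $\b\cdot\widetilde\a$. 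Since independence of the resolution has already been established (the Remark after Definition~\ref{Mather_multiplier}), and since higher direct images compose well under the Leray spectral sequence, it suffices to prove the vanishing on $\overline X:=\varphi_A^{-1}(X)_{\mathrm{strict}}$ with $\varphi_X=\varphi_A|_{\overline X}$ in place of $f$.

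Next, with $I_X\cdot\o_{\overline A}=I_{\overline X}\cdot\o_{\overline A}(-R_{X/A})$, set
$$D=K_{\overline A/A}-\lfloor t Z_{\overline A/A}\rfloor-cR_{X/A},$$
where $\widetilde\a\cdot\o_{\overline A}=\o_{\overline A}(-Z_{\overline A/A})$. By Lemma~\ref{adjunction}, $D|_{\overline X}=\hk_{\overline X/X}-J_{\overline X/X}-\lfloor t Z_{\overline X/X}\rfloor$, so the sheaf whose higher direct images we must kill is $\o_{\overline X}(D|_{\overline X})$. Exactly as in the proof of Proposition~\ref{smoothrestriction}, I would let $\psi\colon\widetilde A\to\overline A$ be the blow-up of $\overline A$ along $\overline X$ with exceptional divisor $T$, put $\mathcal F=\o_{\widetilde A}(-T+\psi^*D)$, and use the identity (valid because $\overline A\to A$ is a factorizing resolution, with $I_X\cdot\o_{\widetilde A}=\o_{\widetilde A}(-F)$ and $F=cT+\psi^*R_{X/A}$ up to the strict transform, which carries no content on the relevant locus)
$$-T+\psi^*D=K_{\widetilde A/A}-\lfloor cF+t\,\psi^*(Z_{\overline A/A})\rfloor.$$
Then the Local Vanishing Theorem for multiplier ideals on the smooth $\widetilde A$ gives $R^p(\varphi_A\circ\psi)_*\mathcal F=0$ for $p>0$, while the projection formula gives $\psi_*\mathcal F=I_{\overline X}\cdot\o_{\overline A}(D)$ and $R^p\psi_*\mathcal F=0$ for $p>0$ (the fibers of $\psi$ over $\overline X$ being projective spaces). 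Feeding this into the Leray spectral sequence for $\varphi_A\circ\psi$ yields $R^p\varphi_{A*}\bigl(I_{\overline X}\cdot\o_{\overline A}(D)\bigr)=0$ for all $p>0$.

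Finally I would combine this with the Local Vanishing Theorem applied directly to $\varphi_A\colon\overline A\to A$ (again legitimate since $R_{X/A}$ is effective and $D=K_{\overline A/A}-\lfloor t Z_{\overline A/A}+cR_{X/A}\rfloor$), which gives $R^p\varphi_{A*}\o_{\overline A}(D)=0$ for $p>0$. The short exact sequence
$$0\to I_{\overline X}\cdot\o_{\overline A}(D)\to\o_{\overline A}(D)\to\o_{\overline X}(D|_{\overline X})\to 0$$
then shows $R^p\varphi_{X*}\o_{\overline X}(D|_{\overline X})=0$ for $p>0$, which is the desired statement on $\overline X$, hence on $X$. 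The main obstacle I anticipate is the bookkeeping in the reduction step: making sure that a single factorizing resolution can be chosen that simultaneously (i) dominates the arbitrary given log resolution $f$, (ii) is a log resolution of $\b\cdot\widetilde\a$ inside $A$, and (iii) has $\overline X$ meeting all exceptional divisors normally, so that Lemma~\ref{adjunction} applies on $\overline X$ — this is exactly where Lemma~\ref{singfac} does the work, but verifying that the divisorial identities for $Z$, $R_{X/A}$, $K$, and $F$ pull back compatibly (and that the floors interact correctly with restriction to $\overline X$, which is transverse to everything in sight) requires care. The rest is a faithful repetition of the cohomological argument already carried out in Proposition~\ref{smoothrestriction}.
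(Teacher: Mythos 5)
Your reduction to a factorizing resolution and the first half of the cohomological argument (the vanishing of $R^p\varphi_{A*}\bigl(I_{\overline X}\cdot\o_{\overline A}(D)\bigr)$ via the blow-up $\psi\colon\widetilde A\to\overline A$ of $\overline X$) are sound and indeed copied from the proof of Proposition~\ref{smoothrestriction}. The gap is in the last step: to extract $R^p\varphi_{X*}\o_{\overline X}(D|_{\overline X})=0$ from the short exact sequence you also need $R^p\varphi_{A*}\o_{\overline A}(D)=0$ for $p>0$, and your justification for this --- ``Local Vanishing applied directly to $\varphi_A$, legitimate since $R_{X/A}$ is effective'' --- does not work. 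The Local Vanishing Theorem requires the divisor being subtracted from $K_{\overline A/A}$ to be (the round-down of a positive combination of) divisors $F$ with $\o_{\overline A}(-F)=\fc\cdot\o_{\overline A}$ for ideals $\fc$ on $A$, so that $-F$ is relatively free, hence $\varphi_A$-nef. When $\codim(X,A)=c\geq 2$ the ideal $I_X\cdot\o_{\overline A}$ is \emph{not} invertible; it factors as $I_{\overline X}\cdot\o_{\overline A}(-R_{X/A})$ with $I_{\overline X}$ non-principal, so $R_{X/A}$ is only the divisorial part of the pullback and $-cR_{X/A}$ has no reason to be $\varphi_A$-nef (effectivity of $R_{X/A}$ is irrelevant to the hypotheses of the theorem you are invoking). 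This is precisely why the proof of Proposition~\ref{smoothrestriction} passes to $\widetilde A$, where $I_X\cdot\o_{\widetilde A}=\o_{\widetilde A}(-F)$ with $F=T+\psi^*R_{X/A}$ \emph{is} invertible and $-cF$ is relatively nef, and why that proposition only ever establishes the vanishing for $I_{\overline X}\cdot\o_{\overline A}(D)$, never for $\o_{\overline A}(D)$ itself. Without the second vanishing, the long exact sequence gives only the surjectivity of $\varphi_{A*}\o_{\overline A}(D)\to\varphi_{X*}\o_{\overline X}(D|_{\overline X})$ together with isomorphisms $R^p\varphi_{X*}\o_{\overline X}(D|_{\overline X})\simeq R^{p}\varphi_{A*}\bigl(\o_{\overline A}(D)\bigr)$ for $p\geq 1$, which is exactly the statement you were trying to prove, not a proof of it.

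The paper's actual argument is intrinsic and avoids the embedding entirely: after reducing to $X$ affine, Lemma~\ref{keylemma} provides $h\in\o_X$ and ideals $I,G$ with $\Omega^n_X\onto I$ and $h\cdot\j ac_X=I\cdot G$; the surjection identifies the Nash blow-up with ${\rm Bl}_I(X)$, so that $\o_{\widehat X}(1)\simeq I\cdot\o_{\widehat X}$ and, by \cite[Proposition~1.7]{DEI}, $\o_Y(\hk_{Y/X})\simeq\o_Y(K_Y)\otimes(I\cdot\o_Y)^{-1}$. Combining with $h\cdot\j ac_X=I\cdot G$ one gets $\o_Y(\hk_{Y/X}-J_{Y/X}-\lfloor tZ_{Y/X}\rfloor)\simeq\o_Y(K_Y-B-\lfloor tZ_{Y/X}\rfloor)$ with $G\cdot\o_Y=\o_Y(-B)$, and since $-B$ and $-Z_{Y/X}$ are both relatively nef over the affine $X$, relative Kawamata--Viehweg gives the vanishing directly. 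If you want to salvage your approach, you would need to supply an independent proof that $R^p\varphi_{A*}\o_{\overline A}(D)=0$ (equivalently, that $-(c-1)T-c\psi^*R_{X/A}-t\psi^*Z_{\overline A/A}$ can be handled by Kawamata--Viehweg on $\widetilde A$), and I do not see how to do that without something like Lemma~\ref{keylemma} anyway.
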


\begin{proof}
Since the assertion is local on $X$, we may assume that $X$ is affine.
It follows from Remark~\ref{lipman} that we can factor $f$ through the Nash blow-up as
$Y\stackrel{\psi}\longrightarrow
 \widehat{X}\stackrel{\varphi}\longrightarrow X$.
Let $\o_{\widehat{X}}(1)=\o_{{\mathbf P}(\Omega^n_X)}(1)|_{\widehat{X}}$.
With $I$  as in Lemma \ref{keylemma},
we have a surjection  of graded algebras ${\rm Sym}(\Omega^n_X)\to \oplus_{i=0}^\infty I^i$.
This induces a closed immersion ${\rm Bl}_I(X)\hookrightarrow {\mathbf P}(\Omega_X^n)$,
hence an isomorphism ${\rm Bl}_I(X)\simeq \widehat{X}$ over $X$, which yields
$\o_{\widehat{X}}(1)\simeq I\cdot\o_{\widehat{X}}$.

It follows from \cite[Proposition~1.7]{DEI} that  $$\o_Y(\hk_{Y/X})\simeq\o_Y(K_Y)\otimes \psi^*(\o_{\widehat{X}}(1))^{-1}\simeq \o_Y(K_Y)\otimes (I\cdot\o_Y)^{-1}.$$ 
Since $f$ is a log resolution of ${\j ac}_X$, we deduce from Lemma~\ref{keylemma} 
that we may write $G\cdot\cO_Y=\cO_Y(-B)$ for a divisor $B$ on $Y$, and we have
$$\o_Y(\hk_{Y/X}-J_{Y/X}-\lfloor tZ_{Y/X}\rfloor)\simeq \o_Y(K_Y)\otimes (I\cdot\o_Y)^{-1}\otimes IG\cdot\o_Y\otimes
\o_Y(-\lfloor tZ_{Y/X}\rfloor)$$
$$\simeq \o_Y(K_Y-B-\lfloor tZ_{Y/X}\rfloor).$$
Since both $-B$ and $-Z_{Y/X}$ are relatively nef over $X$, the Kawamata-Viehweg Vanishing Theorem (see for example \cite[Theorem~1-2-3]{kmm}) implies the vanishing
in the theorem.
\end{proof}

\begin{rem}\label{rem_vanishing}
The vanishing in Theorem~\ref{vanishing} holds, with the same proof, for mixed Mather multiplier ideals. More precisely, if $\a_1,\ldots,\a_r$ are nonzero ideals on $X$, and
$f\colon Y\to X$ is a log resolution of ${\j ac}_X\cdot\a_1\cdot\ldots\cdot \a_r$ such that
$\a_i\cdot\cO_Y=\cO_Y(-Z_i)$, then 
$$R^if_*\left(\o_Y(\hk_{Y/X}-J_{Y/X}-\lfloor t_1Z_1+\ldots+t_rZ_r\rfloor)\right)=0$$
for every $t_1,\ldots,t_r\in{\mathbf R}_{\geq 0}$ and
every $i>0$.
\end{rem}

According to our principle  to replace the usual discrepancy $K_{Y/X}$ by $\hk_{Y/X}-J_{Y/X}$ we can define  ``canonical singularities" for a  not necessarily $\mathbf Q$-Gorenstein
variety.
The paper \cite{dd} introduces the notion of $J$-canonical singularities.
For an arbitrary variety $X$, we say that $X$ has $J$-canonical singularities if 
$\hk_{Y/X}-J_{Y/X}\geq 0$ for a log resolution $f:Y\to X$ of $\j ac_X$
(and automatically for all log resolutions of $\j ac_X$).
They prove the following statement (\cite[Theorem 7.5]{dd}), which we also obtain as a corollary of our results.

\begin{cor}
\label{j-canonical}
If a variety $X$ has $J$-canonical singularities, then $X$ is normal and has rational singularities.
\end{cor}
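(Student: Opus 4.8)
\textbf{Proof proposal for Corollary \ref{j-canonical}.}

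The plan is to deduce both normality and rational singularities directly from the relevant special cases of the Mather multiplier ideal machinery developed above, applied with the unit ideal $\a=\o_X$ and exponent $t=0$. First I would record that $J$-canonicity means precisely $\hk_{Y/X}-J_{Y/X}\geq 0$ for a (equivalently, any) log resolution $f\colon Y\to X$ of $\j ac_X$, so that
$$\hj(X,\o_X)=f_*\o_Y(\hk_{Y/X}-J_{Y/X})\supseteq f_*\o_Y=\o_X.$$
Combined with the already-established inclusion $\hj(X,\o_X)\subseteq i_*(\o_{X_{\rm reg}})$ (and $\hj(X,\o_X)\subseteq\o_X$ from Corollary \ref{cor_ideal}(1)), this forces $\o_X=i_*(\o_{X_{\rm reg}})$; since $X$ is a variety, the latter is exactly the normalization's pushforward locally in codimension one, and more to the point $i_*\o_{X_{\rm reg}}$ contains $\nu_*\o_{X^\nu}$, so $\o_X=\nu_*\o_{X^\nu}$ and $X$ is normal. (Alternatively, invoke Corollary \ref{cor_ideal}(3): $J$-canonicity gives $\overline{\j ac_X}\supseteq\j ac_X\cdot\o_Y$-pushforward $=\o_X$ inside $\hj(X,\o_X)$, forcing $\overline{\j ac_X}=\o_X$, hence $(\o_X:\nu_*\o_{X^\nu})=\o_X$, i.e. $\nu_*\o_{X^\nu}=\o_X$.) I expect the normality half to be essentially immediate once the sign of the discrepancy is exploited.

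Next, for rational singularities, now that $X$ is known to be normal I would use the canonical map $\eta_X\colon\Omega_X^n\to\omega_X$ from the start of \S3 together with the factorization of $f$ through the Nash blow-up. On a log resolution $f\colon Y\to X$ of $\j ac_X$, pulling back $\eta_X$ and using that $f$ factors through $\widehat X$, one gets that the image of $f^*\Omega_X^n\to\omega_Y=\Omega_Y^n$ is $\o_Y(-\hk_{Y/X})\cdot\Omega_Y^n$, while the image of $f^*\omega_X\to\omega_Y$ is governed by $\j ac_X\cdot\o_Y=\o_Y(-J_{Y/X})$ via the identity (used already in the ``normal l.c.i.'' remark) that the image of $\Omega_X^n\to\omega_X$ is $\j ac_X\cdot\omega_X$ on normal l.c.i.\ varieties — but here I only have normality, so instead I would argue: the surjection $\Omega_X^n\to I$ of Lemma \ref{keylemma} and the isomorphism $\o_{\widehat X}(1)\simeq I\cdot\o_{\widehat X}$ used in the proof of Theorem \ref{vanishing} give $\hk_{Y/X}=K_Y-(\text{pullback of the divisor of }I)$, and comparing with $\eta_X$ shows that $f_*\omega_Y\supseteq f_*\o_Y(\hk_{Y/X}-J_{Y/X})=\hj(X,\o_X)=\o_X$ in the $J$-canonical case. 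Thus $f_*\omega_Y=\omega_X$ (the reverse inclusion is automatic for $X$ normal), which is Grauert–Riemenschneider-type surjectivity onto $\omega_X$; but what we actually need for rational singularities is $R^if_*\omega_Y=0$ for $i>0$ together with $f_*\o_Y=\o_X$ and, by Kempf's criterion, the surjectivity $f_*\omega_Y\onto\omega_X$. The first vanishing is Grauert–Riemenschneider; the second is $X$ normal; the third is what the computation above yields.

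Concretely, then, the key steps in order are: (1) unpack the definition of $J$-canonical and get $\hj(X,\o_X)\supseteq\o_X$; (2) combine with $\hj(X,\o_X)\subseteq\o_X$ (Corollary \ref{cor_ideal}) to get $\hj(X,\o_X)=\o_X$, and deduce normality from Corollary \ref{cor_ideal}(3) (or from the $i_*\o_{X_{\rm reg}}$ inclusion); (3) interpret $\hj(X,\o_X)=\o_X$ via the canonical map $\eta_X$ as the statement that the natural map $f_*\omega_Y\to\omega_X$ is surjective, i.e. $X$ satisfies Kempf's criterion; (4) invoke Grauert–Riemenschneider vanishing $R^if_*\omega_Y=0$ ($i>0$) and $f_*\o_Y=\o_X$ to conclude, by Kempf's characterization, that $X$ has rational singularities. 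The main obstacle I anticipate is step (3): making precise, without the l.c.i.\ hypothesis, that the equality $\hj(X,\o_X)=\o_X$ really does translate into the surjectivity of $f_*\omega_Y\to\omega_X$. This requires identifying $f_*\o_Y(\hk_{Y/X}-J_{Y/X})$ with the image of $f_*\omega_Y$ in $\omega_X$ under $\eta$; the tool for this is exactly the computation in the proof of Theorem \ref{vanishing} expressing $\hk_{Y/X}$ in terms of $K_Y$ and the ideal $I$ of Lemma \ref{keylemma}, together with the fact (Lemma \ref{em}) that $\eta_X$ is modeled on a Jacobian ideal $\j ac_M|_X$ of a complete intersection $M\supseteq X$. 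Tracking these identifications carefully, rather than any hard new estimate, is where the work lies.
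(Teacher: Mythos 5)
Your normality argument is essentially the paper's: from $J$-canonicity one gets the chain $\o_X\subseteq \nu_*\o_{X^{\nu}}=f_*\o_Y\subseteq f_*\o_Y(\hk_{Y/X}-J_{Y/X})=\hj(X,\o_X)\subseteq\o_X$, forcing $X^{\nu}=X$. (One slip: in your display you write $f_*\o_Y=\o_X$, which presupposes the normality you are trying to prove; a priori $f_*\o_Y=\nu_*\o_{X^{\nu}}$, and it is the containment $\supseteq\o_X$ that the argument actually uses.) This half is correct and matches the paper.

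The rational-singularities half has a genuine gap. You propose to conclude via Kempf's criterion from three facts: $f_*\o_Y=\o_X$, Grauert--Riemenschneider vanishing $R^if_*\omega_Y=0$, and surjectivity of $f_*\omega_Y\to\omega_X$. But Kempf's criterion reads: $X$ has rational singularities if and only if $X$ is \emph{Cohen--Macaulay} and $f_*\omega_Y=\omega_X$. The Cohen--Macaulay hypothesis is not decorative: Grothendieck duality converts $f_*\omega_Y$ into the dual of $Rf_*\o_Y$ against the dualizing \emph{complex}, and one recovers $Rf_*\o_Y\simeq\o_X$ from $f_*\omega_Y=\omega_X$ only when that complex is a single shifted sheaf. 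You neither establish Cohen--Macaulayness nor can you get it from normality. On top of this, your step (3) --- translating $\hj(X,\o_X)=\o_X$ into surjectivity of $f_*\omega_Y\to\omega_X$ --- is explicitly deferred and, as sketched, compares $f_*\o_Y(\hk_{Y/X}-J_{Y/X})$ (a subsheaf of $\o_X$) with $f_*\omega_Y$ (a subsheaf of $\omega_X$); making that identification precise without the l.c.i.\ hypothesis is exactly the hard point and is not done.

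The paper sidesteps both issues by proving $R^if_*\o_Y=0$ for $i>0$ directly, never touching $\omega_X$ or Cohen--Macaulayness. The needed input is already extracted in the proof of Theorem \ref{vanishing}: $\hk_{Y/X}-J_{Y/X}\sim K_Y-B$, where $G\cdot\o_Y=\o_Y(-B)$ and $-B$ is $f$-nef (Lemma \ref{keylemma}). $J$-canonicity says $E:=\hk_{Y/X}-J_{Y/X}$ is effective and $f$-exceptional, so $K_Y\sim B+E$, and the relative Fujita vanishing theorem (the Proposition quoted from \cite{kmm}) applied with $L=D=\tilde D=0$ and $\tilde L=B$ gives $R^if_*\o_Y=0$. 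Combined with normality this is the definition of rational singularities. If you insist on your route you must prove Cohen--Macaulayness separately, which in effect means redoing this duality/vanishing argument anyway.
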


For the proof of the corollary, we use the following generalization of Fujita's vanishing 
theorem (the original Fujita's vanishing theorem is in \cite{fujita}).
\begin{prop}[\cite{kmm}, Theorem 1-3-1]
 Let $f:Y \to X$ be a proper birational morphism from a non-singular variety $Y$ onto a variety $X$ with divisors $L, \tilde L$ on $Y$.
 Assume that there exist $\mathbf Q$-divisors $D, \tilde D$ and an effective divisor $E$ 
 on $Y$ such that the following conditions are satisfied:
 \begin{enumerate}
 \item $\operatorname{supp} D$ and $\operatorname{supp} \tilde D$ are divisors with only simple normal crossings and  $\lfloor D \rfloor =\lfloor \tilde D \rfloor =0$;
 \item both $-L-D$ and $-\tilde L-\tilde D$ are $f$-nef;
 \item $K_Y\sim L+\tilde L+E$; and
 \item $E$ is  exceptional for $f$.
 \end{enumerate}
 Then, $R^if_*\o_Y(L)=0$ for $i>0$.
 \end{prop}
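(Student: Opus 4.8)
\emph{This is \cite[Theorem~1-3-1]{kmm}; here is a sketch of the argument.} The statement is a common refinement of the relative Kawamata--Viehweg and Grauert--Riemenschneider vanishing theorems, and the plan is to reduce it to the case $E=0$ --- ordinary relative Kawamata--Viehweg vanishing --- by a d\'evissage along the components of $E$ combined with an induction on $\dim X$. Since the conclusion is local on $X$, assume $X$ affine. If $E=0$, hypothesis (3) gives $L\sim K_Y-\tilde L$, hence
$$L-K_Y\sim\tilde D+(-\tilde L-\tilde D),$$
where $\tilde D$ is effective with $\lfloor\tilde D\rfloor=0$ and simple normal crossing support and $-\tilde L-\tilde D$ is $f$-nef; since $f$ is birational, relative Kawamata--Viehweg vanishing applies (in the form already used in the proof of Theorem~\ref{vanishing}, where for a birational morphism relative nefness alone suffices), giving $R^if_*\o_Y(L)=0$ for $i>0$.

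For $E\neq 0$ I would argue by a simultaneous induction on $\dim X$ and on the number of components of $E$ counted with multiplicity. Choose a prime divisor $E_0\subseteq\operatorname{Supp}E$ and use
$$0\to\o_Y(L)\to\o_Y(L+E_0)\to\o_{E_0}(L+E_0)\to 0.$$
The associated long exact sequence of higher direct images, together with the inductive hypothesis, reduces the desired vanishing to that of $R^j(f|_{E_0})_*\o_{E_0}((L+E_0)|_{E_0})$ for $j>0$, together with the surjectivity of $f_*\o_Y(L+E_0)\to(f|_{E_0})_*\o_{E_0}((L+E_0)|_{E_0})$. Now adjunction identifies the restricted data with a configuration of the same type on $E_0$ (replace $E_0$ by a resolution if it is singular), and since $E$ is $f$-exceptional the image $f(E_0)$ has dimension $\le\dim X-2$; after a Stein factorization of $f|_{E_0}$ --- whose finite part contributes nothing to higher direct images --- the outer induction on $\dim X$ applies to the residual birational morphism. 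The base of the induction is vacuous, since a birational morphism onto a point is an isomorphism: if $\dim X=0$ then $Y$ is a point. Propagating the hypotheses through the restriction to $E_0$ is exactly where \emph{both} $f$-nef divisors in (2) are needed: subtracting the effective divisor $(E-E_0)|_{E_0}$ may destroy the nefness of the restriction of one of them, and the other then takes over.

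The step I expect to be the main obstacle is precisely this bookkeeping --- restricting the whole configuration to each (possibly singular) component $E_0$ of $E$, resolving, tracking the discrepancy and the transforms of $\tilde L,D,\tilde D,E$, and checking that the two positivity hypotheses regenerate so that the induction can proceed. The $f$-exceptionality of $E$ is what makes the dimension drop and the base case trivial, while the two $f$-nef divisors are what survive restriction to $E_0$. Handling the fractional parts $D,\tilde D$ (which must meet everything with simple normal crossings) is a routine preliminary: since $\lfloor D\rfloor=\lfloor\tilde D\rfloor=0$ one may work with their supports, or invoke a Kawamata-type cyclic cover. Finally, note that there is no duality shortcut, because $X$ is not assumed normal or Cohen--Macaulay, so the vanishing for $\o_Y(L)$ is not directly equivalent to that for $\o_Y(K_Y-L)$. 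As stated, this is \cite[Theorem~1-3-1]{kmm}; the above merely recapitulates that proof.
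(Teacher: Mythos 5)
The paper does not prove this proposition at all: it is quoted verbatim from \cite[Theorem~1-3-1]{kmm} (Fujita's relative vanishing theorem) and used as a black box in the proof of Corollary~\ref{j-canonical}. So your decision to defer to that reference is exactly the paper's treatment, and your base case $E=0$ is correct: $L\sim K_Y+\tilde D+(-\tilde L-\tilde D)$ with $-\tilde L-\tilde D$ $f$-nef, and relative bigness is automatic for a birational morphism, so relative Kawamata--Viehweg applies.

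The attached sketch, however, should not be mistaken for a recapitulation of the proof; its gaps are not mere bookkeeping. (a) To get $R^if_*\o_Y(L+E_0)=0$ from your inductive hypothesis you must verify hypothesis (2) for the new configuration $(L+E_0,\tilde L,E-E_0)$, i.e.\ produce a boundary $D'$ with $-(L+E_0)-D'$ $f$-nef; adding $E_0$ to $L$ destroys nefness in general, and nothing in the sketch restores it (your remark that ``the other nef divisor takes over'' concerns the restriction to $E_0$, not this step). (b) The induction on $\dim X$ cannot be run as described: since $E_0$ is $f$-exceptional, $f|_{E_0}$ has positive-dimensional generic fibres, so the connected-fibre part of its Stein factorization maps the $(\dim X-1)$-dimensional variety $E_0$ onto a base of dimension at most $\dim X-2$ --- there is no ``residual birational morphism'' to which the proposition (stated only for birational maps) could be applied, and relative Kawamata--Viehweg is also unavailable there because relative bigness fails. (c) The surjectivity of $f_*\o_Y(L+E_0)\to (f|_{E_0})_*\o_{E_0}((L+E_0)|_{E_0})$ is of essentially the same depth as the theorem and is left unproved. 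Note that hypothesis (2) does give at once, by relative Kawamata--Viehweg, $R^if_*\o_Y(L+E)=0$ and $R^if_*\o_Y(K_Y-L)=0$; the entire content of the statement is the descent from $L+E$ to $L$ across the exceptional divisor, which your d\'evissage does not actually accomplish. Since the paper treats the result as known, citing \cite{kmm} suffices; if you want to include an argument, it must either follow Fujita's actual proof or close the three points above.
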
 

\noindent
{\it Proof of Corollary \ref{j-canonical}}. 
Let $\nu: X^{\nu}\to X$ be the normalization and $f:Y\to X$ be a log resolution of 
$\j ac_X$.
By the assumption $\hk_{Y/X}-J_{Y/X}\geq 0$, we obtain
$$\hj(X, \o_X)=f_*\o_Y(\hk_{Y/X}-J_{Y/X})\supseteq f_*\cO_Y= \nu_*\o_{X^{\nu}}.$$
Since $\hj(X, \o_X)$ is an ideal of $\o_X$, we obtain $X^{\nu}=X$.
Now, we will prove $R^if_*\o_Y=0$ for $i>0$.
Note that $\hk_{Y/X}-J_{Y/X}=K_Y-B$ with $-B$ being $f$-nef, which we obtained in the proof of 
Theorem \ref{vanishing}.
Let $D=\tilde D=L=0$, $E=\hk_{Y/X}-J_{Y/X}$ and $\tilde L=B$,
then  $-L-D=0$ and  $-\tilde L -\tilde D=-B$ are both $f$-nef. 
The condition $K_Y\sim L+\tilde L+E$ is also satisfied.
Then, we can apply Fujita's vanishing theorem and obtain $R^if_*\o_Y=0$ for $i>0$.
$\Box$

\vskip.5truecm
We apply the above Local Vanishing Theorem (Theorem \ref{vanishing}) to prove two more restriction results. 
We say that a proper subscheme $X$ of  a variety $B$ is a hyperplane section if $B$ can be embedded
as a closed subvariety of a smooth variety $A$, and if there is a smooth hypersurface $H$ in $A$ such that $X=B\cap H$.

\begin{thm}[Restriction theorem for hyperplane sections]
\label{hyper}
Let $X$ be a closed subvariety of a variety $B$ which is a hyperplane section,
such that $X\cap B_{\rm reg}\neq\emptyset$. If
  $\tilde\a\subseteq \o_B$ is an ideal on $B$ such that its restriction
   $\a=\tilde\a\cdot \o_X$  is non-zero, then 
 for every $t\in {\mathbf R}_{\geq 0}$ we have
  $$\hj(X, \a^t)\subseteq \hj(B,\tilde\a^t)\cdot\cO_X.$$
\end{thm}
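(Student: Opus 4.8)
The plan is to mimic the proof of Proposition~\ref{smoothrestriction}, replacing the classical Local Vanishing on the smooth ambient by Theorem~\ref{vanishing} on $B$, and using the adjunction formula of Lemma~\ref{adjunction} twice: once for $B$ inside $A$ and once for $X$ inside $H$. Since the statement is local on $B$, I would first embed $B$ as a closed subvariety of a smooth variety $A$, with $c=\codim(B,A)$ and a smooth hypersurface $H\subseteq A$ with $X=B\cap H$; then $\codim(X,H)=c$ as well, and $X$ is a Cartier divisor on $B$, cut out by the image $\tilde h\in\o_B$ of a local equation of $H$.

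The main step, and the one I expect to be the main obstacle, is the construction of a good common resolution. Iterating Lemma~\ref{singfac} --- first to produce a factorizing resolution of $B$ inside $A$, then to resolve $H$ and $X$ and force the two factorizations through the Nash blow-ups --- and using that the extensions to $A$ of $\j ac_B$, $\j ac_X$ and $\tilde\a$ are trivial near the generic point of $X$ (this is exactly where the hypotheses $X\cap B_{\rm reg}\neq\emptyset$ and $\a\neq0$ are used), I would obtain a proper birational morphism $\varphi_A\colon\overline A\to A$ from a smooth variety, an isomorphism at the generic points of $B$, $H$ and $X$, such that: the strict transforms $\overline B,\overline H,\overline X$ are smooth with $\overline X=\overline B\cap\overline H$ a transverse intersection; $\overline B\cup\overline H\cup\overline X\cup{\rm Exc}(\varphi_A)$ together with the divisor of $\tilde\a\cdot\o_{\overline B}$ has simple normal crossings; $\varphi_B:=\varphi_A|_{\overline B}$ is a log resolution of $\j ac_B\cdot\tilde\a$ factoring through the Nash blow-up of $B$; and $\varphi_X:=\varphi_A|_{\overline X}$ is a log resolution of $\j ac_X\cdot\a$ factoring through the Nash blow-up of $X$. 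I then set $I_B\cdot\o_{\overline A}=I_{\overline B}\cdot\o_{\overline A}(-R_{B/A})$ and $P:=\varphi_A^{*}H-\overline H$; both $R_{B/A}$ and $P$ are effective and supported on ${\rm Exc}(\varphi_A)$.

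Next I would derive the adjunction formula. Lemma~\ref{adjunction} applied to $B\subseteq A$ gives $(K_{\overline A/A}-cR_{B/A})|_{\overline B}=\hk_{\overline B/B}-J_{\overline B/B}$. On the $H$-side, $\varphi_H:=\varphi_A|_{\overline H}$ is a factorizing resolution of $X$ inside $H$: from $I_H\cdot\o_{\overline H}=0$, $I_X=I_B+I_H$ and transversality one gets $I_X\cdot\o_{\overline H}=I_{\overline X}\cdot\o_{\overline H}(-R_{B/A}|_{\overline H})$, i.e. $R_{X/H}=R_{B/A}|_{\overline H}$, while adjunction for the smooth divisor $\overline H\subseteq\overline A$ gives $K_{\overline H/H}=(K_{\overline A/A}-P)|_{\overline H}$. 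Lemma~\ref{adjunction} applied to $X\subseteq H$ then yields, using $\overline X\subseteq\overline H\cap\overline B$,
$$\hk_{\overline X/X}-J_{\overline X/X}=(K_{\overline H/H}-cR_{X/H})|_{\overline X}=(\hk_{\overline B/B}-J_{\overline B/B})|_{\overline X}-P|_{\overline X}.$$
Writing $Z_{\overline B/B}$ for the divisor of $\tilde\a\cdot\o_{\overline B}$ and $D:=\hk_{\overline B/B}-J_{\overline B/B}-\lfloor tZ_{\overline B/B}\rfloor$, transversality of $\overline X$ with $Z_{\overline B/B}$ gives $\a\cdot\o_{\overline X}=\o_{\overline X}(-Z_{\overline B/B}|_{\overline X})$ and $\lfloor tZ_{\overline B/B}\rfloor|_{\overline X}=\lfloor tZ_{\overline X/X}\rfloor$, whence $\hj(X,\a^t)=\varphi_{X*}\,\o_{\overline X}\bigl((D-P|_{\overline B})|_{\overline X}\bigr)$.

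Finally I would conclude using Theorem~\ref{vanishing}. Since $\overline X$ is a smooth Cartier divisor on $\overline B$, twisting $0\to\o_{\overline B}(-\overline X)\to\o_{\overline B}\to\o_{\overline X}\to0$ by $\o_{\overline B}(D-P|_{\overline B})$ and pushing forward by $\varphi_B$ gives an exact sequence with tail
$$\varphi_{B*}\o_{\overline B}(D-P|_{\overline B})\to\varphi_{X*}\o_{\overline X}\bigl((D-P|_{\overline B})|_{\overline X}\bigr)\to R^{1}\varphi_{B*}\o_{\overline B}(D-P|_{\overline B}-\overline X).$$
The key observation is that $P|_{\overline B}+\overline X=\varphi_A^{*}H|_{\overline B}=\varphi_B^{*}X$, so $\o_{\overline B}(D-P|_{\overline B}-\overline X)=\o_{\overline B}(D)\otimes\varphi_B^{*}\o_B(-X)$, and the projection formula together with Theorem~\ref{vanishing} applied to $\a$ on $B$ give $R^{1}\varphi_{B*}\o_{\overline B}(D-P|_{\overline B}-\overline X)=R^{1}\varphi_{B*}\o_{\overline B}(D)\otimes\o_B(-X)=0$. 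Hence $\hj(X,\a^t)=\varphi_{X*}\o_{\overline X}((D-P|_{\overline B})|_{\overline X})$ is the image of $\varphi_{B*}\o_{\overline B}(D-P|_{\overline B})$ under the restriction map; since $P|_{\overline B}$ is effective this sheaf is contained in $\varphi_{B*}\o_{\overline B}(D)=\hj(B,\tilde\a^t)$, and the restriction map is the one induced by $\o_B\twoheadrightarrow\o_X$, so $\hj(X,\a^t)\subseteq\hj(B,\tilde\a^t)\cdot\o_X$. As noted, the delicate point is assembling the resolution in the second step so that $\overline X=\overline B\cap\overline H$, the two Nash factorizations and all the normal crossings hold simultaneously; once that is in place, the adjunction computation and the final vanishing are routine.
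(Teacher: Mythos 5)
Your proposal is correct and follows essentially the same route as the paper: the same triple application of Lemma~\ref{adjunction} (to $B\subseteq A$, to the smooth hypersurface $H\subseteq A$, and to $X\subseteq H$) yielding the adjunction identity $(\hk_{\overline B/B}-J_{\overline B/B}-R_{X/B})|_{\overline X}=\hk_{\overline X/X}-J_{\overline X/X}$, followed by pushing forward the ideal-sheaf sequence on $\overline B$ and killing the $R^1$ term via Theorem~\ref{vanishing} and the projection formula (your observation that $P|_{\overline B}+\overline X=\varphi_B^*X$ makes this step slightly more explicit than the paper's reference back to Proposition~\ref{smoothrestriction}). The only place where the paper is more careful than you are is the one you flag yourself: it gets all the factorizing properties from a \emph{single} application of Lemma~\ref{singfac} to $B$ inside $A$ over a log resolution that already principalizes $I_{H/A}$ (the conditions for $H$ in $A$, $X$ in $H$, and $X$ in $B$ then come for free because $H$ is Cartier, after blowing up the extra components of $\overline B\cap\overline H$), rather than by iterating the lemma.
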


\begin{proof}
Let $B\hookrightarrow A$ be a closed embedding in a smooth variety, and $H$ a smooth hypersurface
in $A$ such that $X=B\cap H$. We denote by
$\tilde{\tilde\a}$ and $\b_B$ the pull back of $\tilde\a$ and $\j ac_B$, respectively, by the canonical projection $\o_A\to \o_B$, and by
 $\b_X$  the pull back of $\j ac_X$ by the canonical projection $\o_A\to \o_X$.
 
 We claim that there is a projective, birational morphism $\varphi_A\colon \overline A\to A$ that is an isomorphism at the generic points of $B$, $H$ and $X$, such that if 
$\overline B$, $\overline H$ and
$\overline X$ denote the proper transforms in $\overline{A}$ of $B$, $H$ and $X$, respectively,  
then the following hold:
\begin{enumerate}
\item[(i)]
$\varphi_A$ is a log resolution of $\tilde{\tilde\a}\cdot\b_B\cdot\b_X$,
\item[(ii)]
$\varphi_A$ is a factorizing resolution of $B$, and also of $H$, inside $A$,
\item[(iii)]
$\varphi_H:=\varphi_A|_{\overline H}\colon\overline H\to H$ is a factorizing resolution of $X$ inside $H$ and
$\varphi_B:=\varphi_A|_{\overline B}\colon\overline B\to B$ is a factorizing resolution of $X$ inside $B$,
\item[(iv)]
$\varphi_B$ and $\varphi_X:=\varphi_A|_{\overline X}\colon \overline X\to X$ 
are log resolutions of $\tilde\a\cdot {\j ac}_B$ and $\a\cdot {\j ac}_X$, respectively.
\end{enumerate}
We construct $\varphi_A$ as follows. We first take a log resolution
$\varphi'\colon A'\to A$ for the product of $\tilde{\tilde\a}$, $\b_B$, $\b_X$, and $I_{H/A}$, where 
$I_{H/A}$
is the defining ideal of $H$ in $A$. Since $X\cap B_{\rm reg}\neq\emptyset$, we may and will
assume that $\varphi'$ is an isomorphism
 at the generic points of $B$, $H$ and $X$. Let $E$ be the simple normal crossings divisor
 on $A'$ such that
$\o_{A'}(-E)=\tilde{\tilde\a}\cdot \b_B\cdot\b_X\cdot I_{H/A}\cdot\o_{A'}$. We now apply 
Lemma~\ref{singfac} to ($A$, $A'$,  $E$, $B$) to obtain a factorizing resolution $\varphi_A\colon\overline A \to A' \to A$
of $B$ inside $A$.
Note that $\varphi_A$ is also a factorizing resolution of $H$ inside $A$, since it is an embedded 
resolution of $(A, H)$ and the factorizing condition (3) in Definition~\ref{fact} is automatic, as 
$H$ is a hypersurface. 
Therefore $\varphi_A$ satisfies condition (ii) above, and 
conditions (i) and (iv) also follow by construction.
As $\varphi_A$ is a factorizing resolution of $B$ inside $A$, we have
$$I_{B/A}\cdot \o_{\overline A}=
I_{\overline B/\overline A}\cdot \o_{\overline A}(-R_{\overline B/\overline A})$$
for an effective divisor $R_{\overline B/\overline A}$ on $\overline A$, where
$I_{B/A}$ and  $I_{\overline B/\overline A}$ are the defining ideals of $B$ in $A$ and 
$\overline B$ in $\overline A$, respectively.
It is clear that $\overline{X}$ is a connected component of $\overline{B}\cap\overline{H}$.
After blowing-up the other components, we may assume that $\overline{X}=
\overline{B}\cap\overline{H}$.
Since $I_{X/H}=I_{B/A}\cdot\o_H$ and $I_{\overline X/\overline H}=I_{\overline B/\overline A}\cdot\o_{\overline H}$, we obtain
$$I_{X/H}\cdot\o_{\overline H}=I_{\overline X/\overline H}\cdot\o_{\overline H}(-R_{B/A}|_{\overline H}),$$
which shows the first part of (iii), and that we may take
$R_{X/H}=R_{B/A}|_{\overline H}$.
Using similarly that $I_{X/B}=I_{H/A}\cdot\o_B$ and $I_{\overline X/\overline B}=I_{\overline H/\overline A}\cdot \o_{\overline B}$, we obtain
$$I_{X/B}\cdot\o_{\overline B}=I_{\overline X/\overline B}\cdot\o_{\overline B}(-R_{H/A}|_{\overline B}).$$
This gives the second part of (iii), and that
we may take
$R_{X/B}=R_{H/A}|_{\overline B}$. Therefore $\varphi_A$ satisfies all our requirements.

Since $\varphi_A$ is a factorizing resolution of $B$, Lemma \ref{adjunction} gives
\begin{equation}
\label{eq1}
(K_{{\overline A}/A}-cR_{B/A})|_{\overline B}=\hk_{{\overline B}/B}-J_{{\overline B}/B},
\end{equation}
where $c=\codim(B, A)$.
As $\varphi_A$ is a factorizing resolution of the smooth hypersurface $H$, 
 Lemma \ref{adjunction} implies
\begin{equation}
\label{eq2}
(K_{{\overline A}/A}-R_{H/A})|_{\overline H}=K_{{\overline H}/H}.
\end{equation}
Finally, since $\varphi_H$ is a factorizing resolution of $X$ inside $H$, and we have  ${\rm codim}(X,H)=c$,  Lemma \ref{adjunction} gives
\begin{equation}
\label{eq3}
(K_{{\overline H}/H}-cR_{X/H})|_{\overline X}=\hk_{{\overline X}/X}-J_{{\overline X}/X}.
\end{equation}
Using (\ref{eq1}), (\ref{eq2}), (\ref{eq3}), and the formulas for $R_{X/H}$ and $R_{X/B}$,
we obtain 
\begin{equation}
\label{strongadjunction}
(\hk_{\overline B/B}-J_{\overline B/B}-R_{X/B})|_{\overline X}=\hk_{\overline X/X}-J_{\overline X/X}.
\end{equation}
Let $\tilde\a\cdot \o_{{\overline B}}=\o_{{\overline B}}(-Z_{{\overline B}/B})$ and $D=\hk_{{\overline B}/B}-J_{{\overline B}/B}-\lfloor tZ_{{\overline B}/B}\rfloor-R_{X/B}$.
We have an exact sequence:
\begin{equation}
\label{exact2} 
0\to {\varphi_{B}}_*( I_{\overline X/\overline B}\cdot\o_{\overline B}(D))\to {\varphi_{B}}_*( \o_{\overline B}(D))\to {\varphi_{X}}_*( \o_{\overline X}(D|_{\overline X}))
\end{equation}
$$ \to {R^1\varphi_{B}}_*( I_{\overline X/\overline B}\cdot\o_{\overline B}(D))
.$$
Arguing as in the proof of Proposition \ref{smoothrestriction}, we see that
the last term in this sequence is zero: instead of using the Local Vanishing Theorem for
multiplier ideals, we use 
Theorem \ref{vanishing} (in the version mentioned in Remark~\ref{rem_vanishing}).
Since $R_{X/B}$ is effective, we have an inclusion 
${\varphi_{B}}_*( \o_{\overline B}(D))\subseteq \hj (B, \tilde\a^t)$,
hence the exact sequence (\ref{exact2}) gives
$$ {\varphi_{X}}_*( \o_{\overline X}(D|_{\overline X}))\subseteq \hj (B, \tilde\a^t)\cdot\cO_X.$$
On the other hand, it follows from (\ref{strongadjunction}) that
$${\varphi_{X}}_*( \o_{\overline X}(D|_{\overline X}))={\varphi_{X}}_*\left( \o_{\overline X}(\hk_{{\overline X}/X}
-J_{{\overline X}/X}-\lfloor tZ_{{\overline X}/X}\rfloor)\right)=\hj (X, \a^t),$$
and this completes the proof of the theorem.
\end{proof}

\begin{cor}[Restriction theorem for Cohen-Macaulay varieties with proper intersection] 
\label{cm}
Let $B$ and $H$ be Cohen-Macaulay subvarieties of a smooth ambient variety $A$,
such that $X=B\cap H$ is irreducible and reduced, and the intersection is proper, that is,
$\dim(X)=\dim(B)+\dim(H)-\dim(A)$.
If $X\cap B_{\rm reg}\neq\emptyset$, and
$\tilde\a\subset \o_B$ is an ideal on $B$ whose restriction $\a=\tilde\a\cdot \o_X$  to $X$ is nonzero, then for every $t\in {\mathbf R}_{\geq 0}$ we have
\begin{equation}\label{eq_cm}
\hj(X, \a^t)\subseteq \hj(B,\tilde\a^t)\cdot\cO_X.
\end{equation}
\end{cor}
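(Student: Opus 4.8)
The plan is to reduce (\ref{eq_cm}) to the hyperplane--section restriction theorem (Theorem~\ref{hyper}) by passing to the diagonal of $A\times A$. Since the assertion is local on $X$, I may assume $A$ affine. Set $A'=A\times A$, a smooth variety of dimension $2N$ with $N=\dim A$, and $B'=B\times H$; the latter is again a variety and is Cohen--Macaulay because $B$ and $H$ are. The diagonal embedding $x\mapsto(x,x)$ identifies $X$ with the scheme-theoretic intersection $B'\cap\Delta_A$, $\Delta_A\subset A'$ being the diagonal, which is smooth of codimension $N$; by the properness hypothesis $\dim X=\dim B'-N$, so $B'\cap\Delta_A$ is a proper intersection. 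Writing $p\colon B'\to B$ for the first projection and $\tilde\a'=p^{-1}\tilde\a\cdot\o_{B'}$, one has $\tilde\a'\cdot\o_X=\a$, since $p$ restricts on $X$ to the inclusion $X\hookrightarrow B$.

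The next step is to exhibit $X$ as an iterated hyperplane section of $B'$. As $\Delta_A$ is smooth of codimension $N$ in the smooth variety $A'$, I can choose (after shrinking) functions $\ell_1,\dots,\ell_N\in\o_{A'}$ cutting out $\Delta_A$ with each $A'_i:=V(\ell_1,\dots,\ell_i)$ smooth of codimension $i$ --- hence a smooth hypersurface in $A'_{i-1}$ --- and $A'_N=\Delta_A$; a local coordinate system on $A$ provides such a system $\ell^0_1,\dots,\ell^0_N$, and taking the $\ell_i$ to be sufficiently general linear combinations of it, Bertini's theorem in characteristic zero makes in addition each slice $B'_i:=B'\cap A'_i$ generically reduced. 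On the other hand, Cohen--Macaulayness of $B'$ together with properness of $B'\cap\Delta_A$ shows that $\ell_1|_{B'},\dots,\ell_N|_{B'}$ is a regular sequence, so each $B'_i$ is Cohen--Macaulay of pure dimension $\dim B'-i$, hence reduced. Let $C_i$ be the irreducible component of $B'_i$ containing $X$, so $C_0=B'$, $C_N=X$ and $C_i\subseteq C_{i-1}$; the Bertini estimates show the generic point of $C_i$ is a smooth point of $B'_{i-1}$, so $C_i\cap(C_{i-1})_{\mathrm{reg}}\neq\emptyset$, and $\tilde\a'|_{C_i}\neq 0$ because its further restriction to $X$ is $\a\neq 0$. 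Finally I would note that $X\cap B_{\mathrm{reg}}\neq\emptyset$ forces $X\cap H_{\mathrm{reg}}\neq\emptyset$: since $\eta_X\in B_{\mathrm{reg}}$, the ideal $I_B$ is generated near $\eta_X$ by $\codim(B,A)$ elements, so $\mathfrak m_{H,\eta_X}=I_B\cdot\o_{H,\eta_X}$ (using that $\o_{X,\eta_X}=\o_{H,\eta_X}/I_B\o_{H,\eta_X}$ is a field) needs at most $\codim(B,A)$ generators, while $\dim\o_{H,\eta_X}=\codim(X,H)=\codim(B,A)$ by properness; hence $\o_{H,\eta_X}$ is regular, and in particular $\eta_X$ is a smooth point of $B'$, which starts the chain.

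I would then apply Theorem~\ref{hyper} successively along $X=C_N\subseteq C_{N-1}\subseteq\dots\subseteq C_0=B'$. The one wrinkle is that $C_i$ is in general only a connected component of the hyperplane section $C_{i-1}\cap A'_i$, not all of it; but the proof of Theorem~\ref{hyper} applies with no essential change in this situation --- the passage from the full intersection to a component is exactly the step in that proof where the other components of $\overline B\cap\overline H$ are removed by a blow-up. Each step gives $\hj(C_i,(\tilde\a'|_{C_i})^t)\subseteq\hj(C_{i-1},(\tilde\a'|_{C_{i-1}})^t)\cdot\o_{C_i}$, and composing these inclusions gives $\hj(X,\a^t)\subseteq\hj(B',\tilde\a'^t)\cdot\o_X$.

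It remains to pass from $B'$ back to $B$, for which I would use the behaviour of Mather multiplier ideals under products: since $\j ac_{B\times H}=\j ac_B\boxtimes\j ac_H$, since the relevant divisors on the product $Y_B\times Y_H$ of log resolutions (Mather discrepancy, Jacobian, and $\a$-divisor) are sums of the pullbacks of the corresponding divisors on the factors, and since such a product is itself a log resolution of $\j ac_{B'}\cdot\tilde\a'$, a Künneth computation yields $\hj(B',\tilde\a'^t)=\hj(B,\tilde\a^t)\boxtimes\hj(H,\o_H)\subseteq\hj(B,\tilde\a^t)\boxtimes\o_H$; restricting to the diagonally embedded $X$ and using $p|_X=\mathrm{id}_X$ gives $\hj(B',\tilde\a'^t)\cdot\o_X\subseteq\hj(B,\tilde\a^t)\cdot\o_X$, which together with the previous paragraph proves (\ref{eq_cm}). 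The step I expect to require the most care is the bookkeeping in the second paragraph: arranging, by a sufficiently general choice of the $\ell_i$, that every intermediate slice is reduced and that its component through $X$ meets the regular locus of the previous one, so that the ``connected component'' form of Theorem~\ref{hyper} really does apply at each stage --- together with pinning down the product formula for $\hj$.
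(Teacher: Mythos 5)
Your overall strategy is the paper's: reduce to the diagonal $\Delta\subset A\times A$, slice $B\times H$ down to $X$ by $N=\dim A$ hyperplane sections, apply Theorem~\ref{hyper} repeatedly, and finish with a K\"unneth-type product formula for $\hj$ on $B\times H$. The last step and the first step are fine and match the paper. The gap is in the middle: you never establish that the intermediate slices are \emph{irreducible}, and instead pass to the component $C_i$ through $X$ and invoke a ``connected component'' version of Theorem~\ref{hyper}, justified by analogy with the blow-up step in that theorem's proof. That analogy is not valid. In the proof of Theorem~\ref{hyper} the extra components that get blown up are components of $\overline B\cap\overline H$ \emph{upstairs on the resolution}, which are exceptional; the identity $X=B\cap H$ downstairs is used essentially, via $I_{X/B}=I_{H/A}\cdot\o_B$ and $I_{X/H}=I_{B/A}\cdot\o_H$, to get the factorizations $R_{X/B}=R_{H/A}|_{\overline B}$, $R_{X/H}=R_{B/A}|_{\overline H}$ and hence the adjunction identity (\ref{strongadjunction}). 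If $X$ is only one irreducible component of $C_{i-1}\cap A'_i$, these identities fail (e.g.\ the Jacobian ideal of the full reducible intersection restricted to $X$ is strictly smaller than $\j ac_X$ along the locus where $X$ meets the other components), and the inequality one can salvage from Lemma~\ref{adjunction} points in the wrong direction for the desired inclusion. So the component version is not a routine modification, and as written your chain of inclusions is not justified.

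The fix is exactly what the paper does, and is available to you: take the $\ell_i$ to be \emph{general} members of the full linear system $\Lambda=\{\sum_j\lambda_j f_j\}$ spanned by generators $f_1,\dots,f_N$ of $I_\Delta$ (locally). The base locus of $\Lambda|_{B\times H}$ is then exactly $X$, so Bertini's irreducibility theorem applies to $B'_i\smallsetminus X$ and shows it is irreducible and reduced; since $X$ is not a component of $B'_i$ for $i<N$ by dimension count, and $B'_i$ is Cohen--Macaulay (hence without embedded components, as you correctly argue), a descending induction gives that each $B'_i$ is irreducible and reduced. Then Theorem~\ref{hyper} applies verbatim at each stage and no component version is needed. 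Your observation that $X\cap B_{\rm reg}\neq\emptyset$ forces $\eta_X$ to be a regular point of $H$ and hence of $B\times H$ is correct and a useful supplement to the paper's argument, as is the regular-sequence argument for reducedness; but the irreducibility of the slices must be proved, not circumvented.
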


\begin{proof}
Consider the diagonal $\Delta\subset A\times A$.
Since $H$ and $B$ intersect properly, it follows that
  $X\simeq \Delta\cap (H\times B)$ is of codimension $N=\dim(A)$ in $H\times B$.
Both $A\times A$ and $\Delta$ are smooth, hence $\Delta$ is locally defined in $A\times A$
by $N$ equations.  
Since the inclusion (\ref{eq_cm}) is a local statement, we may replace $A$ by a suitable open cover, and therefore we may assume that $A$ is affine, and the ideal of $\Delta$ in 
$A\times A$ is generated by $N$ equations $f_1,\ldots,f_N\in\cO(A\times A)$. 
  
  We claim that  there is 
  a sequence of varieties
  $$X=X_0\subset X_1\subset\cdots \subset X_N=H\times B$$
  such that   $X_i$ is a hyperplane section of $X_{i+1}$ for each $i$ with $0\leq i< N$.
  Let $H_1,\ldots,H_N$ be general elements of the linear system 
  $\Lambda=\{\sum _i \lambda_if_i\mid
  \lambda_i\in k\}$, and let $X_i=(H\times B)\cap H_1\cap\ldots\cap H_{N-i}$. 
  Note that $X_0=X$, and we make the convention $X_N=H\times B$.
  By Bertini's Theorem, each $H_1\cap\ldots\cap H_i$ is smooth, hence $X_i$
  is a hyperplane section in $X_{i+1}$ for $i<N$. Furthermore, $H\times B$
  is Cohen-Macaulay by assumption, hence each $X_i$ is Cohen-Macaulay. In particular,
  no $X_i$ has embedded components. We now prove by descending induction
  that $X_i$ is irreducible and reduced for every $i$ with $1\leq i\leq N$. 
  The case $i=N$ is clear, and let us assume that $X_{i+1}$ is irreducible and reduced for some
  $i\geq 1$. The base locus of
  $\Lambda\vert_{H\times B}$ is equal to $X$, hence another application of Bertini's theorem
  implies that $X_i\smallsetminus X$ is irreducible and reduced.
  Since $X$ is not an irreducible component of $X_i$ by dimension considerations, it follows that
  $X_i$ is irreducible and generically reduced, hence reduced (recall that $X_i$ has no embedded components). This completes the proof of our claim. 
  
  Let $p_2\colon H\times B\to B$ be the canonical projection.
Applying successively Theorem \ref{hyper} for the above sequence of hyperplane sections, we obtain
  \begin{equation}\label{eq2_cm}
  \hj(X,\a^t)\subseteq \hj(H\times B, p_2^{*}(\tilde\a)^t)\cdot\o_X.
  \end{equation}
On the other hand, if we consider 
an appropriate resolution of $H\times B$ of the form 
   $\overline H\times \overline B\to H\times B$, and if we denote by
$q_1\colon\overline{H}\times\overline{B}\to\overline{H}$ and $q_2\colon
\overline{H}\times\overline{B}\to\overline{B}$ the two projections, then we have   
   $$\hk_{(\overline H\times \overline B)/(H\times B)}=
   q_1^*\hk_{\overline H/H}+q_2^*\hk_{\overline B/B},\,\,
   J_{(\overline H\times \overline B)/(H\times B)}=
   q_1^*J_{\overline H/H}+q_2^*J_{\overline B/B}.$$
Using also the K\"{u}nneth formula, these imply
 $$\hj(H\times B, q_2^{*}(\tilde\a)^t)=q_1^{*}\hj(H, \o_H)\otimes q_2^{*}\hj(B, \tilde\a^t),$$
 and we thus obtain
 $$\hj(H\times B, p_2^{*}(\tilde\a)^t)\cdot\o_X\subseteq (\hj(H, \o_H)\cdot\o_X)\cdot
 (q_2^*\hj(B, \tilde\a^t)\cdot\o_X)
   \subseteq \hj(B, \tilde\a^t)\cdot\o_X.$$
Combining this with (\ref{eq2_cm}), we obtain (\ref{eq_cm}).
\end{proof}

\begin{question}
Does the assertion in Corollary~\ref{cm} hold if we drop the assumption that
$H$ and $B$ are Cohen-Macaulay?
\end{question}

On the other hand, the following example shows that one does not have a restriction theorem
when $X$ is an arbitrary subvariety of a singular\footnote{When the ambient variety $B$ is smooth, the restriction theorem is provided by Proposition~\ref{smoothrestriction}.} variety $B$.
\begin{exmp}
 Let $B$ be the hypersurface defined by $x_1^d+\cdots+x_n^d=0$ in ${\mathbf A}^n$, with 
 $d\geq n\geq 3$.
 Note that $(B,0)$ is a non-canonical normal isolated singularity.
 $B$ is a complete intersection, hence
 $$\hj (B, \o_B)=\j(B, \o_B)=f_*(K_{Y/X}),$$ where $f\colon Y\to X$ is a log resolution of 
 $\j ac_B$. Since $B$ is non-canonical, we see that $\hj (B, \o_B)$ is contained in the maximal
 ideal defining $0$.
Suppose now that $X\subset B$ is a line through $0$. 
 Since $X$ is smooth, we have $\hj (X, \o_X)=\j(X, \o_X)=\o_X$.
 Therefore the inclusion $\hj(X, \o_X)\subseteq \hj(B,\o_B)_X$ does not hold.

\end{exmp}

We can now apply our vanishing result in Theorem~\ref{vanishing} to deduce a version
of the Subadditivity Theorem for Mather multiplier ideals. We refer to \cite[\S 9.5.B]{laz} and
\cite[Theorem~6.5]{ee}
for the corresponding result in the setting of usual multiplier ideals. In particular, our approach follows closely
the one in \cite{ee}, which treats the case of singular varieties. 
We start with the 
following adjunction-type formula, which is
\cite[Lemma~6.3]{ee}.

\begin{lem}
\label{diagonaladjunction}
Let $X$ be an $n$-dimensional variety and $Y\to X$ a log resolution of $\j ac_X$.
Consider 
$A=X\times X$ and $A'=Y\times Y$, and let
$\varphi\colon \overline A\to A' \to A$ be a factorizing resolution of the diagonal $\Delta$ inside $A$,
provided by Lemma \ref{singfac}.
If $\overline \Delta$ is the strict transform of $\Delta$ in $\overline{A}$, 
then we have the following inequality of divisors on $\overline A$:
$$\left(\hk_{\overline A/A}-J_{\overline A/A}-nR_{\Delta/A}\right)|_{\overline \Delta}
\geq \hk_{\overline\Delta/\Delta}-2J_{\overline\Delta/\Delta}.$$
\end{lem}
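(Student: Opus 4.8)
The plan is to pass from the singular product $A=X\times X$ up to the smooth product $A'=Y\times Y$, where the strict transform of $\Delta$ is the \emph{smooth} diagonal $\Delta_Y\cong Y$, apply the adjunction formula of Lemma~\ref{adjunction} to the pair $\Delta_Y\subseteq A'$, and then estimate the resulting error divisor by the Mather discrepancy of $f$; the difference between the coefficients $1$ and $2$ in front of $J_{\overline\Delta/\Delta}$ is exactly the slack one needs in the last step. Throughout one identifies $\Delta\cong X$, so that $\codim(\Delta,A)=n$, the ideal $\j ac_\Delta$ corresponds to $\j ac_X$, and $\varphi|_{\overline\Delta}$ factors as $\overline\Delta\xrightarrow{\pi}Y\xrightarrow{f}X$ with $\pi$ a projective birational morphism of smooth varieties; consequently $\hk_{\overline\Delta/\Delta}=\pi^*\hk_{Y/X}+K_{\overline\Delta/Y}$ and $J_{\overline\Delta/\Delta}=\pi^*J_{Y/X}$.

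First I would record how the Mather discrepancy and the Jacobian ideal behave on a product. From $\Omega_{X\times X}=p_1^*\Omega_X\oplus p_2^*\Omega_X$ one gets $\Omega^{2n}_{X\times X}=p_1^*\Omega^n_X\otimes p_2^*\Omega^n_X$, and since $f$ factors through the Nash blow-up of $X$ (Remark~\ref{lipman}), the canonical map $(f\times f)^*\Omega^{2n}_A\to\Omega^{2n}_{A'}$ has image $\o_{A'}(-q_1^*\hk_{Y/X}-q_2^*\hk_{Y/X})\cdot\Omega^{2n}_{A'}$; hence $f\times f$ factors through the Nash blow-up of $A$, with $\hk_{A'/A}=q_1^*\hk_{Y/X}+q_2^*\hk_{Y/X}$. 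Likewise, because the $0$-th Fitting ideal of a direct sum is the product of the $0$-th Fitting ideals and Fitting ideals commute with base change, $\j ac_A\cdot\o_{A'}=\o_{A'}(-q_1^*J_{Y/X}-q_2^*J_{Y/X})$. Composing with the morphism $\psi\colon\overline A\to A'$ of smooth varieties yields
$$\hk_{\overline A/A}=\psi^*\bigl(q_1^*\hk_{Y/X}+q_2^*\hk_{Y/X}\bigr)+K_{\overline A/A'},\qquad J_{\overline A/A}=\psi^*\bigl(q_1^*J_{Y/X}+q_2^*J_{Y/X}\bigr).$$
Since $\psi|_{\overline\Delta}=\pi$ and both projections of $A'$ restrict to the isomorphism $\Delta_Y\cong Y$, restricting to $\overline\Delta$ gives $\hk_{\overline A/A}|_{\overline\Delta}=2\pi^*\hk_{Y/X}+K_{\overline A/A'}|_{\overline\Delta}$ and $J_{\overline A/A}|_{\overline\Delta}=2\pi^*J_{Y/X}$. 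Plugging everything in, the asserted inequality becomes equivalent to
$$\pi^*\hk_{Y/X}+K_{\overline A/A'}|_{\overline\Delta}-K_{\overline\Delta/Y}\ \geq\ n\,R_{\Delta/A}|_{\overline\Delta}.$$

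Next I would identify the left-hand side by ordinary adjunction for the smooth pair $\Delta_Y\subseteq A'$. Feeding a sufficiently large normal crossings divisor on $A'$ into Lemma~\ref{singfac}, one may assume that $\psi$ is at the same time a factorizing resolution of the (smooth, codimension $n$) subvariety $\Delta_Y$ inside $A'$, say $I_{\Delta_Y}\cdot\o_{\overline A}=I_{\overline\Delta}\cdot\o_{\overline A}(-R_{\Delta_Y/A'})$. As $\j ac_{\Delta_Y}=\o_{\Delta_Y}$, the hypothesis of Lemma~\ref{adjunction} is automatic and it gives $\bigl(K_{\overline A/A'}-nR_{\Delta_Y/A'}\bigr)|_{\overline\Delta}=\hk_{\overline\Delta/\Delta_Y}-J_{\overline\Delta/\Delta_Y}=K_{\overline\Delta/Y}$. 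So everything comes down to the estimate
$$n\,\bigl(R_{\Delta/A}-R_{\Delta_Y/A'}\bigr)|_{\overline\Delta}\ \leq\ \pi^*\hk_{Y/X}.$$

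This last inequality is the heart of the argument and the step I expect to be the main obstacle: it says that the extra modification needed for the preimage of the diagonal of $X$, over and above that for the diagonal of $Y$, is bounded on $\overline\Delta$ by the pulled-back Mather discrepancy of $f$. To prove it I would compare the ideals $I_\Delta\cdot\o_{\overline A}\subseteq I_{\Delta_Y}\cdot\o_{\overline A}$ by means of the conormal filtration of $\overline\Delta$ in $\overline A$. Working locally on $Y$, under the canonical identification of $I_{\Delta_Y}/I_{\Delta_Y}^2$ with $\Omega_Y$ (the conormal module of the diagonal of $Y$) the image of $I_\Delta\cdot\o_{Y\times Y}$ corresponds to the image of $f^*\Omega_X\to\Omega_Y$, whose cokernel is $\Omega_{Y/X}$ with ${\rm Fitt}_0(\Omega_{Y/X})=\o_Y(-\hk_{Y/X})$; since this ideal is locally principal, Cramer's rule applied to an $n\times n$ submatrix of a presentation of $\Omega_{Y/X}$ whose determinant generates it produces inclusions such as $\delta\cdot I_{\Delta_Y}\subseteq I_\Delta\cdot\o_{Y\times Y}+I_{\Delta_Y}^2$ and, using that $I_\Delta\cdot\o_{Y\times Y}$ is an ideal, their higher-degree analogues, where $\delta$ is a local generator of $\o_Y(-\hk_{Y/X})$. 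Pulling these back to $\overline A$, restricting to $\overline\Delta$, and taking determinants of the resulting maps of rank-$n$ bundles inside $I_{\overline\Delta}/I_{\overline\Delta}^2$ then yields the bound. The two delicate points are the verification that $\psi$ can indeed be arranged to factorize-resolve $\Delta_Y$ (so that $R_{\Delta_Y/A'}$ makes sense as a divisor), and --- crucially --- keeping exact track of the multiplicity $n=\codim(\Delta,A)$, which is what forces $\hk_{Y/X}$ to appear on the right with coefficient one; this bookkeeping, which must exploit the full ideal structure of $I_\Delta\cdot\o_{Y\times Y}$ rather than only its first-order part, is where most of Eisenstein's argument for \cite[Lemma~6.3]{ee} goes.
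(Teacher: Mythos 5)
The paper does not actually prove this lemma; it is quoted from Eisenstein's \cite[Lemma~6.3]{ee}, so your attempt has to stand on its own. Your reductions are correct and reproduce the right skeleton: the product formulas $\hk_{A'/A}=q_1^*\hk_{Y/X}+q_2^*\hk_{Y/X}$ and $J_{A'/A}=q_1^*J_{Y/X}+q_2^*J_{Y/X}$, the composition rules for $\hk$ and $J$ under $\psi$ and under $\pi=\psi|_{\overline\Delta}$, and the observation that $J_{\overline A/A}|_{\overline\Delta}=2J_{\overline\Delta/\Delta}$ correctly convert the statement into the single estimate $n\,R_{\Delta/A}|_{\overline\Delta}\leq \pi^*\hk_{Y/X}+K_{\overline A/A'}|_{\overline\Delta}-K_{\overline\Delta/Y}$ (in fact equality holds). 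The identification of the image of $I_\Delta\cdot\o_{A'}$ in $I_{\Delta_Y}/I_{\Delta_Y}^2\cong\Omega_Y$ with $\im(f^*\Omega_X\to\Omega_Y)$, whose $n$-th exterior power cuts out $\o_Y(-\hk_{Y/X})$, is exactly the right key input.

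The gap is that the decisive step is not actually carried out, and the mechanism you do spell out cannot give it. The Cramer's-rule inclusion $\delta\cdot I_{\Delta_Y}\subseteq I_\Delta\cdot\o_{Y\times Y}+I_{\Delta_Y}^2$ (or even the full inclusion $\widetilde\delta\cdot I_{\Delta_Y}\subseteq I_\Delta\cdot\o_{Y\times Y}$ that Cramer really provides), pulled back to $\overline A$ and read off along exceptional divisors, yields only $(R_{\Delta/A}-R_{\Delta_Y/A'})|_{\overline\Delta}\leq\pi^*\hk_{Y/X}$ --- the coefficient $n$ on the left is lost, and ``higher-degree analogues'' do not restore it. What closes the argument is the determinant computation you only gesture at: the image of $I_\Delta\cdot\o_{\overline A}=I_{\overline\Delta}\cdot\o(-R_{\Delta/A})$ in $N^*:=I_{\overline\Delta}/I_{\overline\Delta}^2$ is $\o(-R_{\Delta/A}|_{\overline\Delta})\cdot N^*$, so its $n$-th wedge in $\det N^*$ is $\o(-nR_{\Delta/A}|_{\overline\Delta})\det N^*$ --- this is where $n$ enters; the same image is that of $\pi^*\bigl(\im(f^*\Omega_X\to\Omega_Y)\bigr)\to\pi^*\Omega_Y\to N^*$, whose $n$-th wedge is $\o(-\pi^*\hk_{Y/X})$ times the image of $\det\bigl(\pi^*\Omega_Y\to N^*\bigr)$; and the divisor of that determinant equals $K_{\overline A/A'}|_{\overline\Delta}-K_{\overline\Delta/Y}$ by multiplicativity of determinants in the two conormal sequences of $\Delta_Y\subseteq A'$ and $\overline\Delta\subseteq\overline A$. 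Note that this route makes the auxiliary divisor $R_{\Delta_Y/A'}$ unnecessary, which also disposes of your other delicate point: the resolution ``provided by Lemma~\ref{singfac}'' is not guaranteed to be a factorizing resolution of $\Delta_Y$ inside $A'$, so as written your intermediate object $R_{\Delta_Y/A'}$ need not exist for the $\varphi$ in the statement (one would otherwise have to build that extra property into the choice of $\varphi$ in Theorem~\ref{subad}).
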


\begin{thm}
\label{subad}
 If $\a$ and $\b$ are nonzero ideals on a variety $X$, then for every $s,t\in{\mathbf R}_{\geq 0}$
 we have
 $$\hj(X, \j ac_X\a^s\b^t)\subseteq \hj(X, \a^s)\cdot\hj(X, \b^t).$$
\end{thm}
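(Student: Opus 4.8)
The plan is to follow the same diagonal-restriction strategy used in the classical proof of subadditivity, but now replacing the usual discrepancy by $\hk-J$ and using the adjunction-type inequality in Lemma~\ref{diagonaladjunction} together with the vanishing in Theorem~\ref{vanishing}. First I would take a log resolution $g\colon Y\to X$ of $\j ac_X\cdot\a\cdot\b$, write $\a\cdot\o_Y=\o_Y(-Z_\a)$ and $\b\cdot\o_Y=\o_Y(-Z_\b)$, and form $A=X\times X$, $A'=Y\times Y$. Applying Lemma~\ref{singfac} I obtain a factorizing resolution $\varphi\colon\overline A\to A'\to A$ of the diagonal $\Delta\subset A$ which I may also arrange (by choosing the initial resolution of $A'$ suitably, namely resolving the pullbacks of $\j ac_{X\times X}$ and of the two box-products of $\a$ and $\b$) to be a log resolution of everything relevant; here I identify $\overline\Delta\cong\overline X$ where $\overline X$ is a log resolution of $\j ac_X\cdot\a\cdot\b$ dominating $Y$.

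Next I would run the standard exact-sequence argument. On $\overline A$ set
$$D=\hk_{\overline A/A}-J_{\overline A/A}-nR_{\Delta/A}-\lfloor \mathrm{pr}_1^*(sZ_\a+tZ_\b)+\mathrm{pr}_2^*(sZ_\a+tZ_\b)\rfloor,$$
and consider the short exact sequence $0\to I_{\overline\Delta}\cdot\o_{\overline A}(D)\to\o_{\overline A}(D)\to\o_{\overline\Delta}(D|_{\overline\Delta})\to 0$. Pushing forward by $\varphi$ and arguing exactly as in the proof of Proposition~\ref{smoothrestriction} (blowing up $\overline\Delta$, using the projection formula and the local vanishing for the usual multiplier ideal on the smooth blow-up, since $A$ is a product of the smooth $Y$'s after the first step — wait, $A=X\times X$ is singular, so instead I use Theorem~\ref{vanishing} in the mixed form of Remark~\ref{rem_vanishing} applied to $\overline A\to A$) I get that $R^1\varphi_*(I_{\overline\Delta}\cdot\o_{\overline A}(D))=0$. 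Since $R_{\Delta/A}$ is effective, $\varphi_*\o_{\overline A}(D)\subseteq\hj(A,\,\mathrm{pr}_1^*(\a^s\b^t)\cdot\mathrm{pr}_2^*(\a^s\b^t))$, which by the Künneth formula (as used in the proof of Corollary~\ref{cm}, using $\hk_{\overline A/A}=\mathrm{pr}_1^*\hk_{Y/X}+\mathrm{pr}_2^*\hk_{Y/X}$ and likewise for $J$ and the $Z$'s) equals $\hj(X,\a^s\b^t)\boxtimes\hj(X,\a^s\b^t)$, and restricting to $\overline\Delta$ maps into $\hj(X,\a^s)\cdot\hj(X,\b^t)$ — here I use Proposition~\ref{elementary}(1),(2) to pass from $\hj(X,\a^s\b^t)$ to the product $\hj(X,\a^s)\cdot\hj(X,\b^t)$ after observing $\a^s\b^t\subseteq\a^s$ and $\subseteq\b^t$ gives the restriction of the box product into the product of the two ideals. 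Finally, the key point is to identify $\varphi_*\o_{\overline\Delta}(D|_{\overline\Delta})$: by Lemma~\ref{diagonaladjunction} we have $D|_{\overline\Delta}\geq \hk_{\overline\Delta/\Delta}-2J_{\overline\Delta/\Delta}-\lfloor 2sZ_\a+2tZ_\b\rfloor|_{\overline\Delta}$, and after carefully matching $\Delta\cong X$ and chasing how the $Z$'s and $J$'s restrict, the left-hand target pushes forward to contain $\hj(X,\j ac_X\a^s\b^t)$ — more precisely the term $-2J$ against the $\hk-J-J$ on $X$ contributes the extra factor $\j ac_X$, so $\varphi_*\o_{\overline\Delta}(D|_{\overline\Delta})\supseteq\hj(X,\j ac_X\a^s\b^t)$.

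Combining these, the exactness of the pushed-forward sequence gives $\varphi_*\o_{\overline\Delta}(D|_{\overline\Delta})\subseteq\hj(X,\a^s)\cdot\hj(X,\b^t)$, and hence $\hj(X,\j ac_X\a^s\b^t)\subseteq\hj(X,\a^s)\cdot\hj(X,\b^t)$, which is the claim.

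The main obstacle I anticipate is the bookkeeping in the last step: correctly tracking the restriction of the various divisors $\hk$, $J$, $R_{\Delta/A}$, and $\lfloor\cdot\rfloor$ under the identification $\overline\Delta\cong\overline X$, and verifying that the inequality from Lemma~\ref{diagonaladjunction} combines with the floor functions in the right direction so that the $\j ac_X$ factor emerges exactly (and not, say, $\j ac_X^2$ or a fractional power). The floors need care because $\lfloor u+v\rfloor\geq\lfloor u\rfloor+\lfloor v\rfloor$ only one way, which is fortunately the direction needed. The vanishing input and the Künneth splitting are routine given the results already established; the adjunction inequality is the conceptual heart and is quoted from Lemma~\ref{diagonaladjunction}, so the remaining work is genuinely just a divisor computation, but one that must be done with precision.
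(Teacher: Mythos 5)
Your overall strategy is exactly the paper's: restrict to the diagonal in $X\times X$ via a factorizing resolution, use Lemma~\ref{diagonaladjunction} to compare $\hk_{\overline A/A}-J_{\overline A/A}-nR_{\Delta/A}$ with $\hk_{\overline\Delta/\Delta}-2J_{\overline\Delta/\Delta}$ (the $-2J$ producing the extra ${\j ac}_X$ factor), kill the $R^1$ with Theorem~\ref{vanishing}, and split the ambient Mather multiplier ideal by K\"unneth. But there is a genuine error in your choice of the ideal you pull back to $X\times X$, and it breaks the final containment.

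You take the \emph{symmetric} pullback, subtracting $\lfloor \mathrm{pr}_1^*(sZ_\a+tZ_\b)+\mathrm{pr}_2^*(sZ_\a+tZ_\b)\rfloor$ in $D$. On $\overline\Delta$ both pullbacks restrict to $sZ_\a+tZ_\b$, so the exponents double: as you yourself write, $D|_{\overline\Delta}\geq \hk_{\overline\Delta/\Delta}-2J_{\overline\Delta/\Delta}-\lfloor 2sZ_\a+2tZ_\b\rfloor$. From this inequality you can only conclude
$$\varphi_{\Delta*}\cO_{\overline\Delta}(D|_{\overline\Delta})\supseteq \hj(X,{\j ac}_X\,\a^{2s}\b^{2t}),$$
\emph{not} $\supseteq\hj(X,{\j ac}_X\,\a^{s}\b^{t})$ as you claim: subtracting a larger divisor gives a smaller sheaf, so the containment goes the wrong way for you. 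The argument as written therefore proves only the weaker statement $\hj(X,{\j ac}_X\a^{2s}\b^{2t})\subseteq\hj(X,\a^s)\cdot\hj(X,\b^t)$, and rescaling $s,t$ does not recover the theorem (you would land in the larger ideal $\hj(X,\a^{s/2})\cdot\hj(X,\b^{t/2})$).

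The fix is the standard asymmetry of the diagonal trick, which is what the paper does: pull back $\a^s$ through the first projection only and $\b^t$ through the second only, i.e.\ subtract $\lfloor s\,\tilde p^*Z_\a\rfloor+\lfloor t\,\tilde q^*Z_\b\rfloor$. Then the restriction to $\overline\Delta$ gives exactly $\lfloor sZ_\a\rfloor+\lfloor tZ_\b\rfloor$ (no doubling), so the diagonal push-forward contains $\hj(X,{\j ac}_X\a^s\b^t)$, while K\"unneth applied to $(p^*\a)^s(q^*\b)^t$ on $X\times X$ yields precisely $\hj(X,\a^s)\boxtimes\hj(X,\b^t)$, whose restriction to $\Delta$ is $\hj(X,\a^s)\cdot\hj(X,\b^t)$ with no need for the lossy step $\hj(X,\a^s\b^t)\subseteq\hj(X,\a^s)$. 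With that one change the rest of your outline (the exact sequence, the vanishing from Remark~\ref{rem_vanishing}, the effectivity of $R_{\Delta/A}$, and the floor estimates) goes through as in the paper.
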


\begin{proof}
  Let $g\colon Y\to X$ be a log resolution of $\j ac_X\cdot \a\cdot \b$. We write
  $\a\cdot \o_Y=\o_Y(-Z_{Y/X})$ and $\b\cdot \o_Y=\o_Y(-W_{Y/X})$.
  Let $g\times g\colon A'=Y\times Y\to A:=  X\times X$ be the product map. We denote by   
  $\tilde p$ and $\tilde q$ 
  the first and the second projections from $Y\times Y$ to $Y$, and by 
  $p$ and $q$ the first and the second projections from $X \times X$ to $X$.
 Let  $E$ be the divisor on $A'$ such that 
 $\o_{A'}(-E)=p^{*}(\a\cdot \j ac_X )\cdot q^{*}(\b\cdot \j ac_X)$. We consider
 a factorizing resolution $\varphi\colon \overline A\to A' \to A$  of the diagonal $\Delta$ in $A$,
obtained by applying Lemma~\ref{singfac} with respect to $E$.
Let $\overline \Delta$ be the strict transform of $\Delta$ in $\overline A$, and 
let $\varphi_{\Delta}=\varphi|_{\overline\Delta}$.
If we write $\a\cdot \cO_{\overline{\Delta}}=\cO_{\overline{\Delta}}(-Z_{\overline\Delta/\Delta})$
and $\b\cdot \cO_{\overline{\Delta}}=\cO_{\overline{\Delta}}(-W_{\overline\Delta/\Delta})$, then
after identifying
$X$ and $\Delta$, we obtain
$$\hj(X,\j ac_X\a^s\b^t)={\varphi_{\Delta}}_*\left(\hk_{\overline\Delta/\Delta}-2J_{\overline\Delta/\Delta}
-\lfloor sZ_{\overline\Delta/\Delta}\rfloor-\lfloor tW_{\overline\Delta/\Delta}\rfloor\right),$$
and by Lemma~\ref{diagonaladjunction},
 the right hand side is contained in
\begin{equation}\label{eq_subadditivity}
{\varphi_{\Delta}}_*\left(\hk_{\overline A/A}-J_{\overline A/A}
-\lfloor s\tilde  p^*Z_{\overline Y/X}\rfloor-\lfloor t\tilde q^*W_{\overline Y/X}\rfloor-nR_{\Delta/A}|_{\overline\Delta}\right),
\end{equation}
where $n=\dim(X)$. Arguing as in the proof of Theorem \ref{hyper}, we see using 
Theorem \ref{vanishing} that the ideal in (\ref{eq_subadditivity}) is contained in 
$$\hj (X\times X, (p^*\a)^s\cdot (q^*\b)^t)\cdot\cO_{\Delta}.$$
Considering the log resolution $g\times g\colon A'\to A$  of
${\j ac}_A\cdot p^*(\a)\cdot q^*(\b)$, and
noting that $\hk_{A'/A}=\tilde p^*\hk_{Y/X}+\tilde q^*\hk_{Y/X}$ and 
$J_{A'/A}=\tilde p^*J_{Y/X}+\tilde q^*J_{Y/X}$,
we obtain 
$$\hj (X\times X, (p^*\a)^s\cdot (q^*\b)^t)\cdot \cO_{\Delta}=\hj(X,\a^s)\cdot\hj(X,\b^t),$$
and we obtain the inclusion in the statement of the theorem.
 \end{proof}
 
%

We now give a Skoda-type formula for Mather multiplier ideas (see 
\cite[\S 11.1.A]{laz} for the statement in the context of usual multiplier ideals).
The proof follows the one in \emph{loc. cit.}, using our vanishing result in Theorem~\ref{vanishing}.

\begin{thm}
\label{skoda}
If $X$ is a variety of dimension $n$, and $\a\subseteq \o_X$  is a nonzero ideal on $X$, then
for every $t\geq n$ we have
$$\hj(X, \a^t)=\a\cdot \hj(X, \a^{t-1}).$$
In particular, if $t$ is an integer, then
$$\hj(X, \a^t)=\a^{t-n+1}\cdot \hj(X, \a^{n-1})$$
\end{thm}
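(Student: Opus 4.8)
The strategy is the classical one for Skoda's theorem, adapted to Mather multiplier ideals by substituting the usual Local Vanishing Theorem with Theorem~\ref{vanishing}. First I fix a log resolution $f\colon Y\to X$ of $\j ac_X\cdot\a$, write $\a\cdot\o_Y=\o_Y(-Z)$ with $Z=Z_{Y/X}$, and set $M=\hk_{Y/X}-J_{Y/X}$. The inclusion $\a\cdot\hj(X,\a^{t-1})\subseteq\hj(X,\a^t)$ is immediate: on $Y$ one has $\o_Y(-Z)\otimes\o_Y(M-\lfloor(t-1)Z\rfloor)=\o_Y(M-Z-\lfloor(t-1)Z\rfloor)\subseteq\o_Y(M-\lfloor tZ\rfloor)$ since $Z+\lfloor(t-1)Z\rfloor\geq\lfloor tZ\rfloor$, and pushing forward (together with $\a\cdot f_*(\cdot)\subseteq f_*(\a\cdot\o_Y\otimes\cdot)$) gives one containment. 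The content is the reverse inclusion $\hj(X,\a^t)\subseteq\a\cdot\hj(X,\a^{t-1})$.

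\textbf{The Koszul complex argument.} For the reverse inclusion I would choose generators $a_0,\dots,a_p$ of $\a$ (so $p+1$ generators, where one may take $p=n$ after a general change of generators, since a general $\cO_X$-linear combination of the generators already generates $\a$ locally up to the same integral closure — more precisely one reduces to $p=n$ by Noether normalization / general position so that the relevant syzygies are controlled), inducing a surjection $\o_Y(-Z)^{\oplus(p+1)}\to\o_Y(-Z)\cdot\o_Y = \a\cdot\o_Y$ whose kernel is resolved by the (twisted, exact) Koszul complex
\begin{equation*}
0\to \o_Y(-(p+1)Z)\otimes\wedge^{p+1}V\to\cdots\to\o_Y(-2Z)\otimes\wedge^2 V\to\o_Y(-Z)\otimes V\to\o_Y(-Z)\cdot\o_Y\to 0,
\end{equation*}
where $V=k^{p+1}$. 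Twisting this exact complex by $\cF:=\o_Y(M-\lfloor tZ\rfloor + Z)$ (equivalently, comparing $\o_Y(M-\lfloor tZ\rfloor)$ with the $\o_Y(-Z)$-th term) and pushing forward by $f$, one wants to conclude that $f_*\o_Y(M-\lfloor tZ\rfloor)$ lies in the image of $f_*\big(\o_Y(-Z)\otimes\cF\big)^{\oplus(p+1)} = \a\cdot\hj(X,\a^{t-1})^{\oplus(p+1)}$ — note $\o_Y(-Z)\otimes\cF=\o_Y(M-\lfloor tZ\rfloor)$, so one twists by $\o_Y(M-\lfloor(t-1)Z\rfloor)$ and uses $-jZ+(M-\lfloor(t-1)Z\rfloor)$ has the right shape. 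The surjectivity of $f_*$ on the relevant spot follows from a diagram chase through the hypercohomology spectral sequence of the twisted Koszul complex, provided the higher direct images $R^jf_*$ of the intermediate terms $\o_Y\big(M-\lfloor(t-1)Z\rfloor-(j-1)Z\big)$ for $1\le j\le p+1$ vanish in the appropriate range. Here is where $t\geq n$ enters together with $p+1=n+1$: the term $\o_Y(M-\lfloor(t-1)Z\rfloor-(j-1)Z) = \o_Y(\hk_{Y/X}-J_{Y/X}-\lfloor(t-1)Z + (j-1)Z\rfloor)$ is exactly a Mather multiplier sheaf (with exponent $(t-1)+(j-1)\ge 0$), so Theorem~\ref{vanishing} gives $R^{>0}f_*$ of each such term is zero; the numerics $t\ge n$ and exactly $n+1$ generators make the spectral-sequence chase close, exactly as in \cite[\S11.1.A]{laz}.

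\textbf{Main obstacle and the second statement.} The main technical obstacle is the reduction to $n+1$ generators and the verification that the twists appearing in the Koszul complex really are of the form covered by Theorem~\ref{vanishing} (i.e., that the coefficients $(t-1)+(j-1)$ stay nonnegative and that $\lfloor\cdot\rfloor$ behaves additively enough to identify the pushforwards with honest Mather multiplier ideals) — this is the step where one must be careful, but it is routine once one mirrors the classical proof and invokes Theorem~\ref{vanishing} in place of \cite[Theorem~9.4.1]{laz}. Finally, the ``in particular'' statement is a trivial induction: if $t$ is an integer with $t\ge n$, apply the first part repeatedly, $\hj(X,\a^t)=\a\cdot\hj(X,\a^{t-1})=\a^2\cdot\hj(X,\a^{t-2})=\cdots=\a^{t-n+1}\cdot\hj(X,\a^{n-1})$, each application being legitimate since the intermediate exponents $t-1,\dots,n$ are all $\ge n$ until the last step which lands at exponent $n-1$.
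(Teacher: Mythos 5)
Your overall strategy --- an exact Koszul complex on the resolution, pushed forward using Theorem~\ref{vanishing} in place of the classical local vanishing theorem --- is exactly the paper's, and both the easy inclusion and the final induction for the ``in particular'' statement are fine. But there is a genuine off-by-one gap in the main step. If you run the Koszul complex on $g$ sections generating the invertible sheaf $\a\cdot\o_Y=\o_Y(-Z)$ and twist by ${\mathcal L}_t=\o_Y(\hk_{Y/X}-J_{Y/X}-\lfloor tZ\rfloor)$, the terms are $\wedge^iV\otimes_k{\mathcal L}_{t-i}$ for $0\le i\le g$, and the chase through the short exact sequences of kernels requires $R^j\varphi_*{\mathcal L}_{t-j-1}=0$ for $1\le j\le g-1$; the deepest twist is ${\mathcal L}_{t-g}$, to which Theorem~\ref{vanishing} applies only when $t\ge g$. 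With your choice of $p+1=n+1$ generators this forces $t\ge n+1$, not $t\ge n$: at $t=n$ the term ${\mathcal L}_{-1}=\o_Y(\hk_{Y/X}-J_{Y/X}+Z)$ is not covered by the vanishing theorem (the twist by $+Z$ destroys relative nefness), so the spectral-sequence chase does not close as you assert.

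The fix --- and what the paper does --- is to use exactly $n$ elements. One cannot in general generate $\a$ itself by $n$ elements, but after passing to a suitable affine cover one can choose a \emph{reduction} $\b\subseteq\a$ generated by $n$ elements $s_1,\dots,s_n$ (this is \cite[Proposition~4.6.8]{bh}, via the bound on the analytic spread; your appeal to ``Noether normalization / general position'' is gesturing at this but lands on the wrong count). Since $\b$ and $\a$ have the same integral closure, on a log resolution of $\a\cdot\b\cdot\j ac_X$ one has $\b\cdot\o_Y=\a\cdot\o_Y=\o_Y(-Z)$, so the $n$ sections $s_i$ already generate $\o_Y(-Z)$ and the Koszul complex has length $n$; the required vanishings $R^j\varphi_*{\mathcal L}_{t-j-1}=0$ for $1\le j\le n-1$ all involve exponents $\ge t-n\ge 0$. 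The resulting surjectivity of $V\otimes_k\varphi_*{\mathcal L}_{t-1}\to\varphi_*{\mathcal L}_t$ gives $\hj(X,\a^t)=\b\cdot\hj(X,\a^{t-1})\subseteq\a\cdot\hj(X,\a^{t-1})$, the last inclusion using $\b\subseteq\a$. You should also record explicitly the identity $\lfloor tZ\rfloor-iZ=\lfloor(t-i)Z\rfloor$ for integers $i$, which is what identifies the Koszul twists with honest Mather multiplier sheaves and lets Theorem~\ref{vanishing} apply verbatim.
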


\begin{proof}
The inclusion $\a\cdot \hj(X, \a^{t-1})\subseteq \hj(X, \a^t)$ holds for every $t\geq 1$, and it is an immediate consequence of the definition of the Mather multiplier ideal. Therefore from now on we focus on the reverse inclusion.

  If $X$ is covered by open subsets $U_i$, then it is clearly enough to prove the theorem for each $U_i$. Since $X$ is $n$-dimensional, after taking a suitable open cover, we may assume that
$X$ is affine, and there is a reduction $\b$ of $\a$ generated by $n$ elements 
$s_1,\ldots,s_n$ (see \cite[Proposition~4.6.8]{bh}). This means that $\b\subseteq\a$, and there
is $m\geq 1$ such that $\b\a^m=\a^{m+1}$. 

 Let $\varphi\colon Y\to X$ be a log-resolution of $\a\cdot\b\cdot \j ac_X$. 
 We write $\a\cdot\o_Y=\b\cdot\o_Y=\o_Y(-Z_{Y/X})$ and
consider ${\mathcal L}_s=\o_Y(\hk_{Y/X}-J_{Y/X}-\lfloor sZ_{Y/X}\rfloor)$ for every 
$s\in {\mathbf R}_{\geq 0}$.
It follows from Theorem \ref{vanishing} that $R^p\varphi_*{\mathcal L}_s=0$ for $p> 0$.

If $V$ is the $k$-vector space generated by $s_1,\ldots,s_n$, then we have on $Y$
an exact Koszul complex
$$0\to \wedge^nV\otimes_k\o_Y(nZ_{Y/X})\to \cdots\to  V\otimes_k\o_Y(Z_{Y/X})\to \o_Y\to 0.$$
By tensoring with ${\mathcal L}_t$ we get the exact complex
$$0\to  \wedge^nV\otimes_k{\mathcal L}_{t-n}\to\cdots\to V\otimes_k{\mathcal L}_{t-1}
\to {\mathcal L}_{t}\to 0.$$
Since $R^p\varphi_*{\mathcal L}_{t-i}=0$ for $0\leq i\leq n$ and for all $p>0$, we deduce
that we have an exact complex on $X$
$$0\to  \wedge^nV\otimes_k\varphi_*{\mathcal L}_{t-n}\to\cdots\to V\otimes_k\varphi_*{\mathcal L}_{t-1}
\to \varphi_*{\mathcal L}_{t}\to 0.$$
In particular, the map $V\otimes_k\varphi_*{\mathcal L}_{t-1}
\to \varphi_*{\mathcal L}_{t}$ is surjective, hence
$$\hj(X, \a^{t})=\b\cdot \hj(X, \a^{t-1})\subseteq
\a\cdot\hj(X, \a^{t-1}).$$
This completes the proof of the theorem.
\end{proof}

\begin{cor}[Brian\cedilla{c}on-Skoda type formula]
If $\a$ is an ideal on an $n$-dimensional variety $X$, then for every integer $m\geq n$ we have
$$\overline{{\j ac}_X\cdot\a^m}\subseteq \a^{m-n+1}.$$
\end{cor}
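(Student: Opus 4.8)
The plan is to deduce this Brian\c{c}on--Skoda type formula directly from the Skoda-type Theorem (Theorem~\ref{skoda}) together with the basic fact that the Mather multiplier ideal is integrally closed and contained in $\o_X$ (Corollary~\ref{cor_ideal}). First I would observe that, since $\hj(X,\a^m)$ is integrally closed by Corollary~\ref{cor_ideal}(2), it suffices to prove the inclusion
$$\overline{{\j ac}_X\cdot\a^m}\subseteq \hj(X,\a^m),$$
because the Skoda theorem (in its integer form) then gives $\hj(X,\a^m)=\a^{m-n+1}\cdot\hj(X,\a^{n-1})\subseteq \a^{m-n+1}$, using that $\hj(X,\a^{n-1})\subseteq\o_X$.

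Next I would establish the inclusion $\overline{{\j ac}_X\cdot\a^m}\subseteq \hj(X,\a^m)$ using a log resolution $f\colon Y\to X$ of ${\j ac}_X\cdot\a$. Writing ${\j ac}_X\cdot\o_Y=\o_Y(-J_{Y/X})$ and $\a\cdot\o_Y=\o_Y(-Z_{Y/X})$, the integral closure of ${\j ac}_X\cdot\a^m$ consists exactly of the functions $\phi$ with $\ord_{E}(\phi)\geq \ord_E(J_{Y/X}+mZ_{Y/X})$ for every prime divisor $E$ on $Y$; in other words, $\overline{{\j ac}_X\cdot\a^m}\cdot\o_Y\subseteq \o_Y(-J_{Y/X}-mZ_{Y/X})$. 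Since $\hk_{Y/X}$ is effective, this gives
$$\overline{{\j ac}_X\cdot\a^m}\subseteq f_*\o_Y(-J_{Y/X}-mZ_{Y/X})\subseteq f_*\o_Y(\hk_{Y/X}-J_{Y/X}-mZ_{Y/X})=\hj(X,\a^m),$$
where in the last equality I use that $m$ is an integer so $\lfloor mZ_{Y/X}\rfloor=mZ_{Y/X}$. Combining this with the first paragraph completes the argument.

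I expect the only genuine subtlety to be the valuative description of the integral closure of a product of ideals on a variety with arbitrary singularities: one needs that $\overline{\b}\cdot\o_Y\subseteq \o_Y(-D)$ whenever $\b\cdot\o_Y=\o_Y(-D)$ on a resolution $Y$ — equivalently, that the integral closure of $\b$ is computed by the divisorial valuations appearing on $Y$. This is standard (a consequence of the valuative criterion for integral closure, since every discrete valuation of the function field centered on $X$ is dominated by one with center a divisor on a sufficiently fine model), and it applies verbatim here because $Y$ is smooth and $f$ is a log resolution of ${\j ac}_X\cdot\a$. Everything else is a direct citation of results proved above, so the proof is short.
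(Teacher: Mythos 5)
Your proposal is correct and follows essentially the same route as the paper: both establish $\overline{{\j ac}_X\cdot\a^m}\subseteq f_*\o_Y(-J_{Y/X}-mZ_{Y/X})\subseteq f_*\o_Y(\hk_{Y/X}-J_{Y/X}-mZ_{Y/X})=\hj(X,\a^m)$ using the effectivity of $\hk_{Y/X}$ and the valuative description of integral closure, and then conclude by the Skoda-type Theorem together with $\hj(X,\a^{n-1})\subseteq\o_X$. The only cosmetic difference is that the paper reuses the resolution from the proof of Theorem~\ref{skoda} (a log resolution of $\a\cdot\b\cdot{\j ac}_X$ for a reduction $\b$ of $\a$), which is immaterial.
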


\begin{proof}
We may assume that $\a$ is nonzero, since otherwise the assertion is trivial.
We keep the notation in the proof of Theorem~\ref{skoda}. 
Since the divisor $\hk_{Y/X}$ is effective, we have an inclusion
$$\o_Y(-J_{Y/X}-mZ_{Y/X})\subseteq \o_Y(\hk_{Y/X}-J_{Y/X}-mZ_{Y/X}).$$ 
We deduce
$$\overline{\j ac_X\cdot \a^m}\subseteq \varphi_*(\o_Y(-J_{Y/X}-mZ_{Y/X}))$$
$$\subseteq \varphi_*(\o_Y(\hk_{Y/X}-J_{Y/X}-mZ_{Y/X}))=\hj(X, \a^m).$$
Using Theorem \ref{skoda}, we get
$$\overline{\j ac_X\cdot \a^m}\subseteq\hj(X, \a^m)=\a^{m-n+1}\cdot\hj(X, \a^{n-1})\subseteq
\a^{m-n+1}.$$
\end{proof}

We also have a version of the Summation Theorem (see \cite[Corollary~2]{JM})
for Mather multiplier ideals.

\begin{thm}\label{summation}
If $\a_1,\ldots,\a_r,\b$ are nonzero ideals on the variety $X$, then
for every $s,t\in {\mathbf R}_{\geq 0}$ we have
\begin{equation}\label{eq_summation}
\hj\left(X, (\a_1+\ldots+\a_r)^s\b^t\right)= \sum_{s_1+\ldots+s_r=s}
\hj(X,\a_1^{s_1}\cdots\a_r^{s_r}\b^t).
\end{equation}
\end{thm}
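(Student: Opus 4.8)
The plan is to imitate the proof of the Summation Theorem for usual multiplier ideals from \cite{JM}, which uses the deformation-to-the-normal-cone idea realized concretely via a product trick, together with Theorem~\ref{vanishing} in place of the classical Local Vanishing Theorem. First I would reduce to the case $r=2$ by induction (and to a single summand $\a_1$ trivially), so we must prove $\hj(X,(\a_1+\a_2)^s\b^t)=\sum_{s_1+s_2=s}\hj(X,\a_1^{s_1}\a_2^{s_2}\b^t)$. The inclusion $\supseteq$ is elementary: each $\a_1^{s_1}\a_2^{s_2}\subseteq(\a_1+\a_2)^{s_1+s_2}$ (after passing to large integer powers to make sense of it, using Proposition~\ref{elementary}), so each term on the right is contained in the left-hand side by monotonicity. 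The substance is the reverse inclusion.

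For $\subseteq$, the idea is to work on $X\times\bA^1$ (or $X\times X$, following the product device used in Theorem~\ref{subad}). Consider $\ba=\a_1+\a_2$ on $X$ and, on $W:=X\times\bA^1$ with coordinate $u$ on $\bA^1$, the ideal $\c:=\a_1+u\cdot\a_2\cdot\cO_W$; the fiber of $\c$ over $u=\lambda\neq 0$ is $\a_1+\a_2=\ba$, while restricting appropriately recovers $\a_1^{s_1}\a_2^{s_2}$ on a modification. Concretely, I would instead follow \cite{JM} more literally: on $X\times X$ with projections $p,q$, set $\ba\cdot\cO=p^*\a_1+q^*\a_2$-type constructions are replaced by the observation that $\hj(X\times X,(p^*\a_1)^{s_1}(q^*\a_2)^{s_2}(\cdots)^{t})$ restricts along the diagonal $\Delta$, and summing over $s_1+s_2=s$ along a resolution corresponds to an ideal that computes $(\a_1+\a_2)^s$ by the binomial expansion on the exceptional divisors. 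Take $g\colon Y\to X$ a common log resolution of $\j ac_X\cdot\a_1\cdot\a_2\cdot\b$, write $\a_i\cdot\cO_Y=\cO_Y(-Z_i)$ and $\b\cdot\cO_Y=\cO_Y(-W)$, and note $\ba\cdot\cO_Y=\cO_Y(-Z)$ with $Z=\min(Z_1,Z_2)$ computed componentwise on the SNC divisor. The key combinatorial point, exactly as in \cite{JM}, is that on a single SNC component $\lfloor sZ\rfloor$ is recovered as $\min_{s_1+s_2=s}(\lfloor s_1Z_1+s_2Z_2\rfloor)$ coefficient-wise, except that the floors do not commute with the min; one repairs this by passing to the product $Y\times Y$ and a factorizing resolution $\varphi\colon\overline A\to Y\times Y\to X\times X$ of $\Delta$ as in Lemma~\ref{diagonaladjunction}, where along $\overline\Delta$ one has the adjunction inequality $\bigl(\hk_{\overline A/A}-J_{\overline A/A}-nR_{\Delta/A}\bigr)|_{\overline\Delta}\geq\hk_{\overline\Delta/\Delta}-2J_{\overline\Delta/\Delta}$, and where $p^*\a_1+q^*\a_2$ pulled back is $\cO(-(\text{something computing }\ba))$.

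So the steps, in order: (1) reduce to $r=2$; (2) prove $\supseteq$ via Proposition~\ref{elementary}(1),(3); (3) choose a log resolution $g\colon Y\to X$ of $\j ac_X\cdot\a_1\cdot\a_2\cdot\b$ and, following Lemma~\ref{diagonaladjunction} and the proof of Theorem~\ref{subad}, a factorizing resolution $\varphi\colon\overline A\to Y\times Y\to X\times X$ of $\Delta$ taken with respect to the divisor cutting out $p^*(\a_1\a_2\b\,\j ac_X)\cdot q^*(\a_1\a_2\b\,\j ac_X)$, so that the relevant exceptional and pullback divisors have SNC support; (4) express $\hj(X,(\a_1+\a_2)^s\b^t)$ as $\varphi_{\Delta*}$ of a sheaf on $\overline\Delta$ using the identification $X\cong\Delta$, exactly as in Theorem~\ref{subad}; (5) bound below on $\overline\Delta$ using the adjunction inequality of Lemma~\ref{diagonaladjunction} to replace the $\overline\Delta$-discrepancy terms by restrictions of $\overline A$-terms; (6) apply Theorem~\ref{vanishing} (in the mixed form of Remark~\ref{rem_vanishing}) on $\overline A$ together with the exact-sequence/restriction argument from Theorem~\ref{hyper} to push the containment down to $\hj(X\times X,(p^*\ba)^s(p^*\b)^t\cdots)\cdot\cO_\Delta$; and (7) on $X\times X$, use the classical Summation Theorem identity combined with $\hk_{A'/A}=\tilde p^*\hk_{Y/X}+\tilde q^*\hk_{Y/X}$, $J_{A'/A}=\tilde p^*J_{Y/X}+\tilde q^*J_{Y/X}$ and the Künneth formula, precisely as in the last paragraph of the proof of Theorem~\ref{subad}, to identify the result with $\sum_{s_1+s_2=s}\hj(X,\a_1^{s_1}\a_2^{s_2}\b^t)$.

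The main obstacle I expect is step (7): on $X\times X$ one needs the statement that $\hj(X\times X,(p^*\ba)^s(p^*\b)^t)\cdot\cO_\Delta=\sum_{s_1+s_2=s}\hj(X,\a_1^{s_1}\a_2^{s_2}\b^t)$, and unwinding the left side via Künneth produces $\hj$ of the \emph{product} ideal, not the sum, so one must instead set things up so that the ideal on $X\times X$ whose diagonal restriction we take is $p^*\a_1+q^*\a_2$ (raised to power $s$) rather than $p^*(\a_1+\a_2)$; then the floor of $s$ times the exceptional order of $p^*\a_1+q^*\a_2$ on the SNC resolution $\overline A$ genuinely equals $\min_{s_1+s_2=s}\lfloor s_1(\text{ord of }p^*\a_1)+s_2(\text{ord of }q^*\a_2)\rfloor$ on each component — because the two divisors $p^*Z_1$ and $q^*Z_2$ have \emph{disjoint} supports away from $\overline\Delta$ after the factorizing resolution — and the binomial/combinatorial identity of \cite[Corollary~2]{JM} applies verbatim. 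Managing this ``separation of supports'' while keeping everything compatible with the factorizing resolution of $\Delta$ and with Lemma~\ref{diagonaladjunction} is the delicate bookkeeping; once it is in place, the vanishing input (Theorem~\ref{vanishing}) and the restriction machinery are entirely parallel to Theorems~\ref{hyper} and \ref{subad}.
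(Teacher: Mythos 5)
Your proposal takes a genuinely different route from the paper, and that route has two concrete gaps which prevent it from proving the theorem as stated. First, the diagonal/restriction mechanism you import from Theorem~\ref{subad} unavoidably introduces a Jacobian factor: Lemma~\ref{diagonaladjunction} reads
$\left(\hk_{\overline A/A}-J_{\overline A/A}-nR_{\Delta/A}\right)|_{\overline \Delta}\geq \hk_{\overline\Delta/\Delta}-2J_{\overline\Delta/\Delta}$,
and the extra copy of $J_{\overline\Delta/\Delta}$ on the right is precisely why the Subadditivity Theorem is a statement about $\hj(X,\j ac_X\,\a^s\b^t)$ rather than about $\hj(X,\a^s\b^t)$. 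Running your steps (4)--(6) therefore yields at best a containment for $\hj\bigl(X,\j ac_X(\a_1+\cdots+\a_r)^s\b^t\bigr)$, which is strictly weaker than the asserted equality (\ref{eq_summation}), where no Jacobian correction appears. Second, step (7) is circular: identifying $\hj(X\times X,(p^*\a_1+q^*\a_2)^s\cdots)\cdot\cO_\Delta$ with the desired sum requires a Summation Theorem on the \emph{singular} variety $X\times X$ (the classical \cite{JM} statement applies on smooth ambient spaces, and K\"unneth only decomposes Mather multiplier ideals of product ideals $(p^*\a_1)^{s_1}(q^*\a_2)^{s_2}$, not of the sum $p^*\a_1+q^*\a_2$). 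Your ``separation of supports'' remark does not close this: the componentwise identity $\lfloor s\min(a,b)\rfloor=\min_{s_1+s_2=s}\lfloor s_1a+s_2b\rfloor$ is elementary, but the minimizing $(s_1,s_2)$ varies from component to component of the exceptional divisor, so one still needs a global argument to see that $\sum_{s_1+s_2=s}\varphi_*\cO(\hk-J-\lfloor s_1A_1+s_2A_2+tB\rfloor)$ exhausts $\varphi_*\cO(\hk-J-\lfloor sA+tB\rfloor)$.

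The paper's proof dispenses with the product construction entirely and works on $X$ itself: take one log resolution $\varphi\colon Y\to X$ of the product of $\b$, $\j ac_X$, and all $\a_i+\a_j$ (by \cite[Lemma~3.1]{JM} this is automatically a log resolution of $\b\cdot(\a_1+\cdots+\a_r)\cdot\j ac_X$); the inclusion ``$\supseteq$'' follows from $\lfloor s_1A_1+\cdots+s_rA_r\rfloor\geq\lfloor sA\rfloor$; and the inclusion ``$\subseteq$'' follows by twisting the exact cellular complex ${\mathcal E}_{\bullet}$ of \cite[\S 3]{JM} --- whose final term involves $\cO_Y(-A)$ and whose next term is a direct sum of sheaves $\cO_Y(-\lfloor s_1A_1+\cdots+s_rA_r+tB\rfloor)$ with $\sum_i s_i=s$ --- by $\cO_Y(\hk_{Y/X}-J_{Y/X})$ and applying Theorem~\ref{vanishing} (in the form of Remark~\ref{rem_vanishing}) to each term, so that the pushed-forward complex stays exact and the surjectivity of its last map gives exactly the missing inclusion. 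This is where the Jow--Miller combinatorics genuinely enters, and it replaces your steps (3)--(7). Your steps (1)--(2) for the easy inclusion are fine (the reduction to $r=2$ is unnecessary), but the core of your argument should be replaced by the cellular-resolution-plus-vanishing argument on $Y$.
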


\begin{proof}
Let $\varphi\colon Y\to X$ be a log resolution of the product of $\b$, ${\j ac}_X$, and of all
$\a_i+\a_j$, for $1\leq i,j\leq r$. It follows from \cite[Lemma~3.1]{JM}
that $\varphi$ is also a log resolution of $\b\cdot (\a_1+\ldots+\a_r)\cdot {\j ac}_X$. 
Furthermore, if we write $\a_i\cdot\cO_Y=\cO_Y(-A_i)$, $\b\cdot \cO_Y=\cO_Y(-B)$, and
$(\a_1+\ldots+\a_r)\cdot\cO_Y=\cO_Y(-A)$, then $A$ is the largest effective divisor on $Y$ with
$A\leq A_i$ for all $i$. 
For every $s_1,\ldots,s_r$ with
$\sum_is_i=s$, we have
$$\lfloor s_1A_1+\ldots+s_rA_r\rfloor\geq\sum_{i=1}^r\lfloor sA\rfloor,$$
and the inclusion ``$\supseteq$" in (\ref{eq_summation}) then follows from definition
of Mather multiplier ideals. 

We now consider the reverse inclusion. It is shown in \cite[\S 3]{JM} that there is an exact complex on $Y$
$${\mathcal E}_{\bullet}:\,\,\,
0\to {\mathcal E}_{r-1}\to\ldots\to{\mathcal E}_1
\to {\mathcal E}_0\to 0$$
with the following properties:
\begin{enumerate}
\item ${\mathcal E}_0=\cO_Y(-A)$.
\item ${\mathcal E}_1$ is a finite direct sum of sheaves of the form
$\cO_Y(-\lfloor s_1A_1+\ldots+s_rA_r+tB\rfloor)$, with $\sum_is_i=s$.
\item Every ${\mathcal E}_i$ with $2\leq i\leq r-1$ is a finite direct sum of 
sheaves of the form $\cO_Y(-\lfloor a_1A_1+\ldots+a_rA_r+bB\rfloor)$, for some
$a_1,\ldots,a_r,b\in{\mathbf R}_{\geq 0}$. 
\end{enumerate}

It follows from the above properties and Theorem~\ref{vanishing} that 
$$R^i\varphi_*({\mathcal E}_j\otimes\cO_Y(\widehat{K}_{Y/X}-J_{Y/X}))=0\,\,\text{for all} \,\,i>0\,\,\text{and}\,\,j\geq 0.$$
 This implies that the push-forward $\varphi_*({\mathcal E}_{\bullet}\otimes\cO_Y(\widehat{K}_{Y/X}-J_{Y/X}))$ is exact. The surjectivity of the last map
in this complex, and properties (1) and (2) give the inclusion
``$\subseteq$" in (\ref{eq_summation}).
\end{proof}

We end with some considerations concerning an asymptotic version of Mather multiplier
ideals. For basic facts about graded sequences of ideals and their multiplier ideals, we refer to
\cite[Chapter~11]{laz}. Let $X$ be a variety, and let $\a_{\bullet}=(\a_m)_{m\geq 1}$
be a graded sequence of ideals on $X$, that is, $\a_p\cdot\a_q\subseteq\a_{p+q}$ for
all $p,q\geq 1$. We assume that some $\a_p$ is nonzero. 
Given $t\in{\mathbf R}_{\geq 0}$ and $p,m\geq 1$ such that
$\a_p$ is nonzero, we obtain using Proposition~\ref{elementary}
$$\hj(X,\a_p^{t/p})=\hj(X, (\a_p^m)^{t/mp})\subseteq\hj(X,\a_{pm}^{t/mp}).$$
It follows from the Noetherian property that there is an ideal $\hj(X,\a_{\bullet}^t)$ such that
$\hj(X,\a_p^{t/p})=\hj(X,\a_{\bullet}^t)$ whenever $p$ is divisible enough. This is the
\emph{asymptotic Mather multiplier ideal} of $\a_{\bullet}$. 
It is straightforward to extend Theorems~\ref{hyper}, \ref{cm}, \ref{subad}, and 
\ref{summation} to the setting of graded sequences of ideals. 
We leave for the reader the task of formulating these extensions.
We only note that if
$\a^{(1)}_{\bullet},\ldots,\a^{(r)}_{\bullet}$ are graded sequences of ideals, then their product and sum have as their $p^{\rm th}$ terms $\prod_{i=1}^r\a^{(i)}_p$, and respectively,
$\sum_{i_1+\ldots+i_r=p}\a^{(1)}_{i_1}\cdot\ldots\cdot\a^{(r)}_{i_r}$.

\makeatletter \renewcommand{\@biblabel}[1]{\hfill#1.}\makeatother


\begin{thebibliography}{11}

\bibitem{bv} A.~Bravo and O.~Villamayor, A strengthening of resolution of singularities in characteristic zero, Proc. London Math. Soc. (3) \textbf{86} (2003), 327--357.

\bibitem{bh} W.~Bruns and J.~Herzog, \emph{Cohen-Macaulay rings}, 
Cambridge Studies in Advanced Mathematics \textbf{39}, Cambridge University Press, Cambridge, revised edition 1998.
\bibitem{DEI} T.~de Fernex, L.~Ein and S.~Ishii, Divisorial 
valuations via arcs, Publ. RIMS, \textbf{44} (2008), 425--448.
  
 \bibitem{dd}
 T.~de Fernex and R.~Docampo,  Jacobian discrepancies and rational singularities,
 preprint, arXiv:1106.2172.
  
\bibitem{ee} E.~Eisenstein, Generalizations of the restriction theorem for multiplier ideals, preprint, arXiv:1001.2841v1, to appear in Ann. Inst. Fourier (Grenoble).

\bibitem{e-Mus2} L.~Ein and M.~Musta\cedilla{t}\u{a}, Jet schemes and singularities, 
in \emph{Algebraic Geometry--Seattle 2005, Part 2}, 505--546, Proc. Sympos. Pure Math. 
\textbf{80}, Part 2, Amer. Math. Soc., Providence, RI, 2009.

\bibitem{fujita} T.~Fujita, A relative version of Kawamata-Viehweg's vanishing theorem,
unpublished Proceedings of Symposium of Algebraic Geometry, Hiroshima Kyouiku-Kaikan (1985) 107--119.


\bibitem{Ishii}
S.~Ishii, Mather discrepancy and the arcs spaces, preprint, arXiv:1106.0345.

\bibitem{JM}
S.-Y. Jow and E.~Miller,
Multiplier ideals of sums via cellular resolutions, Math. Res. Lett. \textbf{15} (2008), 359--373.



\bibitem{kmm} Y.~Kawamata, K.~Matsuda, and K.~Matsuki, Introduction to the miniml model problem, in \emph{Algebraic geometry, Sendai, 1985}, 283--360, Adv. Stud. Pure Math. 
\textbf{10}, North-Holland, Amsterdam, 1987.

 \bibitem{laz} 
 R.~Lazarsfeld, \emph{Positivity in Algebraic Geometry}, II, Ergebnisse der Mathematik und ihrer
Grenzgebiete, 3. Folge, Vol. \textbf{49}, Springer-Verlag, Berlin, 2004.

 
 
 \bibitem{lip} J. Lipman,  On the Jacobian ideal of the module of differentials,
 Proc. Amer. Math. Soc. \textbf{21} (1969), 422--426.


\end{thebibliography}
\end{document}